\documentclass[a4paper,11pt]{article}
\usepackage[latin1]{inputenc}
\usepackage[T1]{fontenc}
\usepackage[british,UKenglish,USenglish,american]{babel}
\usepackage{amsmath}
\usepackage{amsfonts}
\usepackage{hyperref}
\usepackage[T1]{fontenc}
\usepackage[latin1]{inputenc}
\usepackage{ntheorem}
\usepackage{theorem}
\usepackage{graphics}
\usepackage{makeidx}
\usepackage{fancyhdr}
\usepackage{hyperref}
\usepackage{verbatim}
\usepackage{bbm}
\usepackage{bm}
\usepackage{amssymb}
\usepackage{color}
\numberwithin{equation}{section}
\setlength{\oddsidemargin}{0.40cm} \setlength{\evensidemargin}{0.40cm}
\textwidth 16cm
\usepackage{amssymb}
\newcommand{\be}{\begin{equation}}
\newcommand{\ee}{\end{equation}}
\newcommand{\benn}{\begin{equation*}}
\newcommand{\eenn}{\end{equation*}}
\newcommand{\bea}{\begin{eqnarray}}
\newcommand{\eea}{\end{eqnarray}}

\newcommand{\beann}{\begin{eqnarray*}}
\newcommand{\eeann}{\end{eqnarray*}}

\newtheorem{theorem}{Theorem}[section]

\newtheorem{corollary}[theorem]{Corollary}
\newtheorem{lemma}[theorem]{Lemma}

\newtheorem{definition}[theorem]{Definition}
\theorembodyfont{\rm}

\newtheorem{remark}[theorem]{Remark}

\newtheorem{assumptions}[theorem]{Assumptions}
\newtheorem{ansatz}[theorem]{Ansatz}

\newcommand{\qed}{\hfill $\Box$\smallskip}
\newcommand{\E}{\noindent{$\mathbb{E}$ \ }}

\parindent0.5pt

\def\R{\mathbb{R}}

\def\P{\mathbb{P}}
\def\E{\mathbb{E}}

\def\P{\mathbb{P}}

\def\cF{\mathcal{F}}

\def\cH{\mathcal{H}}

\def\cL{\mathcal{L}}

\def\cN{\mathcal{N}}
\def\cO{\mathcal{O}}

\def\cS{\mathcal{S}}


\def\txtd{{\textnormal{d}}}




\sloppy

\allowdisplaybreaks

\title{Finite-time Lyapunov exponents for SPDEs with fractional noise}
\author{Alexandra Blessing (Neam\c tu)~\thanks{Alexandra Blessing (Neam\c tu).  Department of Mathematics and Statistics, University of Konstanz,
Universit\"atsstra\ss{}e~10, 78464 Konstanz, Germany. E-Mail: alexandra.neamtu@uni-konstanz.de}~~~~and~~~~Dirk Bl\"omker\thanks{Dirk Bl\"omker. Institut f\"ur Mathematik, Universit\"at Augsburg, Universit\"atsstra{\ss}e 12, 86135 Augsburg, Germany.~E-Mail: dirk.bloemker@math.uni-augsburg.de} }
\begin{document}
\maketitle

\begin{abstract}
We estimate the finite-time Lyapunov exponents for a stochastic partial differential equation driven by a fractional Brownian motion (fbm) with Hurst index $H\in(0,1)$ close to a bifurcation of pitchfork type.
 We characterize regions depending on the distance from bifurcation, the Hurst parameter of the fbm and the noise strength where finite-time Lyapunov exponents are positive and thus indicate a change of stability. The results on  finite-time Lyapunov exponents are novel also for SDEs perturbed by fractional noise.

\end{abstract}

{\bf Keywords:} fractional Brownian motion, finite-time Lyapunov exponents, amplitude equations, bifurcations for SPDEs.\\
{\bf MSC (2020):} 60H15, 60H10, 37H15, 37H20.
\section{Introduction}

The main goal of this work is to 
provide a tool for a bifurcation analysis
for general SPDEs of the type
\begin{align}\label{spde1}
\begin{cases}
\txtd u = [A u + \nu u +\cF(u)]~\txtd t +\sigma \txtd W^H_t \\
u(0)=u_{0},
\end{cases}
\end{align}
where $A$ is a linear operator with a one dimensional kernel, the parameter $\nu\in\R$ shifts the spectrum of $A$, $\cF$ is a stable cubic nonlinearity and $\sigma>0$ denotes the intensity of the infinite-dimensional noise, which is given by a Hilbert-space valued fractional Brownian motion $(W^H(t))_{t\in[0,T]}$ with Hurst index $H\in(0,1)$. The results obtained in this work provide a novel bifurcation analysis also for SDEs. Due to the non-Markovianity of the noise, to our best knowledge no methods have been developed so far for the computation of (finite-time) Lyapunov exponents. The work~\cite{KLN:22} investigates related aspects using early-warning signs to detect changes of stability in a finite-dimensional slow-fast system perturbed by additive fractional noise where $H>1/2$. The analysis of the finite-time Lyapunov exponents for the SDE 
\begin{align}\label{sde:p} 
	\txtd x = (\nu x -x^3)~\txtd t+\sigma~\txtd W_t
\end{align}
driven by a Brownian motion around its unique random equilibrium  splits into different cases.
For $\nu<0$ one  relies on deterministic stability and shows that FTLE are negative with probability one. For $\nu>0$ one essentially relies on the structure of the invariant measure, where one needs that this has to be close to zero on a set of positive probability. Together with an estimate of the paths of the Brownian motion on a finite time horizon, the estimates for the FTLE on a set of positive probability become quite accessible~\cite{CDLR17}. 

All these arguments break down in the context of a fractional Brownian motion. Obviously, the events describing the location of the random equilibrium at the initial time and the finite-time estimates of the paths of the fractional Brownian motion are in general not independent anymore. 
In order to overcome this issue we focus on deterministic initial data and expect to obtain positive finite-time Lyapunov exponents for the SDE~\eqref{sde:p} with a fractional Brownian motion.
This provides a novel insight in the dynamics of SDEs perturbed by fractional noise. 
In order to analyze the SPDE case as in~\cite{BlNe:23} we rely on the approximation with amplitude equations (AE) also studied  in~\cite{BlNe:22}. These results  are valid for the full range of Hurst indices $H\in(0,1)$ and are applicable if the noise is given by a trace-class fractional Brownian motion. 
This assumption was crucial  in order to to control the approximation order, which further reflects the time-scale on which we observe the finite-time Lyapunov exponents.

\paragraph{Main Results.}
Relying on the self-similarity of the fractional Brownian motion
\begin{align*}
	& W^H(T\varepsilon^{-2}) \stackrel{\mbox{law}}{=} W^H(T)\varepsilon^{-2H} ~~\text{ and of the derivative   }	~~ \dot{W}^H(T\varepsilon^{-2}) \stackrel{\mbox{law}}{=} \varepsilon^{2-2H}\dot{W}^H(T),
\end{align*}
the approximation of the SPDE~\eqref{spde} via amplitude equations was derived in~\cite{BlNe:22}. 
Based on this result and using the theory of finite-time Lyapunov exponents we show the following behavior of FTLE from~\eqref{spde1} near a change of stability. The precise statement is based on the interplay between the distance towards the bifurcation, intensity of the noise and Hurst parameter of the fbm:



\begin{itemize}
    \item[I.] {\em Before the bifurcation, $\nu<0$.}
    The solutions of~\eqref{spde1} are stable for all $\sigma$ with probability one. Therefore we show that all FTLEs are negative with probability one.

    \item[II.] {\em After the bifurcation, moderate noise strength,   $0<\sigma\approx \nu^{H+\frac12} \ll1$.}
    Here we have instability, namely there is a solution for which the finite-time Lyapunov exponent $ \lambda_T>0 $ is positive with positive probability,  for times of order $1/\nu^{H+\frac12}$.  Due to the nature of the approximation result (Theorem~\ref{thm:aprox}), our result is applicable for times $T$ of order $1/\nu^{H+1/2}$ but for technical reasons not up to $0$.  This means there is a $\nu$ dependent interval, where we can prove that the FTLE is positive and this interval contains values of the type $C/{\nu^{H+1/2}}$ for some values of the constant $C>0$. \\
    The proof of this statement  relies on the approximation with the amplitude equation $\txtd  b =( b -b^3)~\txtd T +\frac{\sigma}{\nu^{\frac12+H}} ~\txtd \beta^H(T)$, where $(\beta^H(T))_{T\geq 0}$ is a fractional Brownian motion. A novel argument in the proof of this statement is to start the amplitude equation in a suitably rescaled deterministic initial data for which the solution of the amplitude equation is located in a ball around zero with positive probability.  
    
\item[III.] {\em After the bifurcation, small noise strength, $0<\sigma \ll \nu^{H+\frac12}\ll 1$} 
Similar to case II.\ we observe instability using the amplitude equation $\txtd b = (b - b^3)~\txtd T$. 
    
    \item[IV.] {\em At the bifurcation, $0\leq \nu^{H+\frac12} \ll \sigma \ll1$.}
    Here we have
    stability, meaning that $\lambda_T<0$  for all solutions with positive probability. The proof of this statement relies on the approximation with 
    the amplitude equation  of the type  $\partial_T b = -b^3\txtd T + \txtd \beta^H(T)$, where $(\beta^H(T))_{T\geq 0}$ is a fractional Brownian motion. 
\end{itemize}

This analysis provides novel results for finite-time Lyapunov exponents with fractional noise and also improves the results in~\cite{BlNe:23} for the Brownian motion. More precisely in order to prove the statement II. we show that it is not necessary to start the solutions of the SPDE in the rescaled attractor of the SDE but in an arbitrary rescaled small deterministic initial condition. 
This information allows us to apply a support theorem~\cite[Proposition 5.8]{HaOh:07} from which we can further find positive finite-time Lyapunov exponents for SDEs with fractional additive noise and stable cubic nonlinearities with positive probability.

We transfer these results to the finite-time Lyapunov exponents of the SPDE~\eqref{spde} approximating it with an amplitude equation as established in Theorem~\ref{thm:aprox}. A technical step is to quantify the dependence on $H$ of the approximation error between the linearization of the SPDE and the linearization of the amplitude equation around a solution which satisfies the support theorem. 

As one of the main technical novelties in order to prove IV. we establish a lower bound on the probability that the amplitude equation is small for lots of times. This argument does not rely on stationary solutions and Birkhoff's ergodic theorem as for the case of a Brownian motion~\cite[Lemma 4.2]{BlNe:23}, as this is not available for solutions of SDEs driven by fractional noise. In all cases we compute an error term between the two linearizations for $H\in(0,1)$ obtaining different bounds depending on the range of $H$.

The paper is organized in the following way. Section \ref{assumptipns} states all necessary assumptions and establishes the setting we are working in. 
In Section \ref{sec:ex} we give a short remark about existence and uniqueness  of solutions for our SPDE.
The properties of FTLE for the corresponding AE are studied in Section \ref{four}, while Section \ref{sec:approx} provides the key approximation result for AE. The final Section \ref{six} states the main results in full details and provides their proofs.

\section{Setting and Assumptions}\label{assumptipns}
We work in the following setting. We let $\cH$ stand for a separable Hilbert space and consider the SPDE driven by an Hilbert-space valued fractional Brownian motion $(W^H(t))_{t\in[0,T]}$ with Hurst index $H\in(0,1)$
\begin{align}\label{spde}
\begin{cases}
\txtd u = [A u + \nu u +\cF(u)]~\txtd t +\sigma \txtd W^H_t \\
u(0)=u_{0}\in \cH.
\end{cases}
\end{align}
We make the following standard assumptions on the linear operator $A$ and on the cubic non-linearity $\cF$. 
\begin{assumptions}\label{a} (Differential operator $A$) The linear operator $A$ generates a compact analytic semigroup $(e^{tA})_{t\geq 0}$ on $\cH$. 
 Moreover, it is symmetric and non-positive and has a one-dimensional kernel which we denote by $\cN$.  
 We define  the orthogonal projection  $P_c$ onto $\cN$, set $P_s=\text{Id}-P_c$ and obtain that $\cH=\cN\oplus \cS$, where $\cS$ stands for the range of $P_s$. 
 The semigroup is exponentially stable on $P_s\cH$ which means that there exists $\mu>0$ such that 
\[ 
\|e^{tA} P_s\|_{\cL(\cH)}\leq e^{-t\mu},~~\text{ for all } t\geq 0. 
\]
We further define the spaces $\cH^\alpha=D((1-A)^\alpha)$ for $\alpha\geq 0$ endowed with the norm $\|\cdot\|_\alpha=\|(1-A)^\alpha\cdot \|$ and scalar product $\langle u, v \rangle_\alpha=\langle (1-A)^\alpha u,(1-A)^\alpha v \rangle$ and set $\cH^{-\alpha}=(\cH^\alpha)^*$ the dual of $\cH^\alpha$. It is well-known that $(e^{tA})_{t\geq 0}$ is an analytic semigroup on $\cH^\alpha$ for every $\alpha\in\mathbb{R}$. Finally, we have that $\cN\subset \cH^\alpha$ for all $\alpha>0$ since  $(1-A)^\alpha\cN=\cN$. 
\end{assumptions}

Under our assumptions we have for some constant $C>0$ depending on $\alpha>0$ that
$\|A^\alpha P_su\| \geq C\|P_su\|$ 
for all $u\in\cH$, which we use frequently.

\begin{assumptions}\label{f}(Nonlinearity)
We assume that there exists a Banach space $X$ such that 
\[
\cH^{\alpha}  \subset X \subset \cH
\]
for $\alpha\in(0,1/2)$ with continuous and dense embeddings.
Moreover, the mapping $\mathcal{F}: X\to X^*\subset \cH^{-\alpha}$
is a stable cubic (i.e.\ trilinear) nonlinearity with
\begin{equation}
\label{e:stableF}
\langle \cF(u)-\cF(v), u-v\rangle 
\leq - c\|u-v\|_X^4  ,\quad \mbox{ for } u,v \in X.
\end{equation} 
\end{assumptions}
Let us remark that we can allow terms like $C\|u-v\|^2$ on the r.h.s.\ of \eqref{e:stableF}, but we can always modify the linear term to remove these terms.

%
\begin{assumptions}\label{s}(Noise)
We assume that $(W^H(t))_{t\in[0,T]}$ is a trace-class fractional Brownian motion on $\cH$.  This assumption was made in~\cite{BlNe:22} and in particular it implies that the stochastic convolution 
\[ Z(t)=\int_0^t e^{A(t-s)}~\txtd W_s \]
is well-defined and $Z\in C([0,T];\cH^\alpha)$ for $\alpha<H$. 
\end{assumptions}

\begin{remark}
For $\cF$ we can show the following sign condition. For any positive $\delta>0$ there is a constant $C>0$ depending on $\delta$ such that for all $u, z\in X$
\begin{equation}
\label{e:boundF}
\langle\cF(u+z),u\rangle 
\leq -c\|u+z\|^4_X + C\|u+z\|^3_X\|z\| 
\leq - \delta\|u\|^4_X + C_\delta \|z\|_X^4.
\end{equation}

As $\cF$ is trilinear we readily have that $\cF$ is Fr\'echet-differentiable with 
\[
D\cF(u)[h]= \cF(u,u,h)+\cF(u,h,u)+\cF(h,u,u).
\] 
Moreover, for $u,h\in  X$ we obtain due to~\eqref{e:boundF}
\begin{equation}
    \label{e:sign}
    \langle D\cF(u)h,h\rangle 
= \lim_{t\to 0} \frac1t \langle \cF(u+th)-\cF(u), h\rangle
\leq - \lim_{t\to 0} \frac1{t^2} \|th\|^4
= 0 .
\end{equation}
\end{remark}

Let us remark additionally, that we use an estimate like $D\cF_c(b)\leq -c b^2$ in our proof that arises from the one-dimensionality of $\mathcal{N}$.   
In order to remove the condition that $\cN$ is one-dimensional, we will need that $\cF_c$ is a genuine non-degenerate cubic term with an analogous estimate. 
We refer to potentials similar to~\cite{GeTs:22}, where for example $\cF(b)=-cb|b|^2$ was treated.\\

In~\cite{BlNe:22} we used the following definition of the $\cO$ notation. 
\begin{definition}
We say that a term $F_{\varepsilon}=\cO(f_{\varepsilon})$ if and only if there exist positive $\varepsilon$-independent constants $C$ and $\varepsilon_0$ such that $|F_{\varepsilon}|\leq C f_{\varepsilon}$ for all $\varepsilon\in(0,\varepsilon_0]$.

For a random quantity we write $F_{\varepsilon}=\cO(f_{\varepsilon})$ if the above the statement holds true  on a set with probability going to 1 if $C\to \infty$. 
\end{definition}

\begin{assumptions}\label{o:z}
For the stochastic convolution, 
we have for every small $\kappa>0$
\begin{align}\label{scaling:z}P_s Z = \cO(T^{\kappa})\qquad\text{and}\qquad 
P_c Z = P_c W^H = \cO( T^{H})
\end{align}
uniformly in $T$ on any interval $[0,T_0]$ in the space $X$. 
\end{assumptions}
\begin{remark}
These bounds were obtained  using the scaling properties of the fractional Brownian motion and the factorization method in~\cite[Appendix B]{BlNe:22}. We have 
 for every small $\kappa>0$
\begin{align*}
\sup\limits_{t\in[0,T_0\varepsilon^{-2}]}\|P_c W^H(t)\| \stackrel{\mbox{law}}{=} \varepsilon^{-2H} \sup\limits_{T\in[0,T_0]} \|P_cW^H(T)\| =\mathcal{O}(\varepsilon^{-2H-\kappa})
 \end{align*}
 with probability almost $1$,
 whereas, with high probability
 \begin{align*}
\sup\limits_{t\in[0,T_0\varepsilon^{-2}]} \|P_sZ(t)\|=\cO(\varepsilon^{-\kappa}).
 \end{align*}
\end{remark}


\section{Existence of solutions }\label{sec:ex}
The existence of solutions of SPDE with additive fractional noise and stable cubic nonlinearities was established in~\cite[Theorem 4.3]{MaSc:04}. In order to obtain some regularity properties of the solution we briefly sketch an alternative proof similar to the case of the Brownian motion~\cite{BlNe:23} which relies  on the Galerkin method and the standard transformation $w=u-Z$ that solves the random PDE
 \[
\partial_t w = Aw +\nu w +\cF(w+Z).
 \]
 For this equation one can apply classical pathwise existence results,
 see for example \cite{Te:97,Rou:13}. This is based on \eqref{e:boundF}
giving regularity in $L^4(0,T,X)$, together with the compact embedding of $X$ into $\cH^{1/2}$ and Aubin-Lions Lemma.

For initial conditions in $\cH$ and $Z$ being a continuous stochastic process with values in $\cH^\alpha\subset X$ this shows global 
existence of solutions such that for all $T>0$
\[
u-Z \in L^2(0,T,\cH^{1/2}) \cap  C^0([0,T],\cH)\cap L^4(0,T,X) 
\]
which also implies some regularity of  $\partial_t(u-Z)$ as 
$A(u-Z) \in L^{2}(0,T,\cH^{-1/2})$ and 
$\cF(u) \in L^{4/3}(0,T,X^*)$.

The pathwise uniqueness of solutions follows immediately from \eqref{e:stableF}. For the 
difference $d=u_1-u_2$ of two solutions 
$u_1$ and $u_2$ satisfying
\[
\partial_t d = Ad +\nu d +\cF(u_1)-\cF(u_2)
\]
we only need 
the differentiability of the $\cH$-norm to conclude
\[\partial_t \|d\|^2 = \langle Ad + \nu d +\cF(u_1)-\cF(u_2), d \rangle \leq \nu \|d\|^2.
\]
The differentiability of the norm follows, as  we have \[d=(u_1-Z)-(u_2-Z) \in L^2(0,T,\cH^{1/2}) \cap  L^\infty(0,T,\cH)
\]
by standard parabolic regularity together with $d \in L^4(0,T,X)$ and 
$\cF(u_i) \in L^{4/3}(0,T,X^*)$.

With the arguments sketched above one can prove the following theorem, which we state without proof.
\begin{theorem}
\label{thm:exSPDE}
Let Assumptions~\ref{a},~\ref{f},~\ref{s} be satisfied.
Then for all initial conditions $u_0\in\cH$ there is a unique (up to global null sets)
stochastic process $u$ with continuous paths in $\cH$, 
which is a weak solution of~\eqref{spde}
and satisfies for all $T>0$ 
\[
u-Z \in L^2(0,T,\cH^{1/2}) \cap  C^0([0,T],\cH) 
\cap L^4(0,T,X). 
\]
\end{theorem}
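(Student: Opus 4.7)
The plan is to follow the pathwise strategy sketched by the authors: decouple the noise by introducing $w = u - Z$, solve a random PDE for $w$ by a Galerkin scheme, and then recover $u$. Since $Z \in C([0,T];\cH^\alpha)$ with $\alpha \in (0,H)$ by Assumption~\ref{s}, and $\cH^\alpha \hookrightarrow X$ by Assumption~\ref{f}, the forcing $\cF(w+Z)$ can be handled pathwise using the sign condition \eqref{e:boundF}.

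First I would fix an $\omega$ outside a null set where $Z(\cdot,\omega) \in C([0,T];X)$ holds and consider the deterministic problem $\partial_t w = Aw + \nu w + \cF(w+Z)$ with $w(0) = u_0$. I would build a Galerkin approximation $w_n$ in the span of the first $n$ eigenfunctions of $-A$ (these exist by compactness and symmetry in Assumption~\ref{a}). Testing the Galerkin equation against $w_n$ and applying \eqref{e:boundF} with the splitting $u = w_n$, $z = Z$, one gets
\begin{equation*}
\tfrac12 \tfrac{d}{dt}\|w_n\|^2 + \|(-A)^{1/2}w_n\|^2 \leq \nu \|w_n\|^2 - \delta \|w_n\|_X^4 + C_\delta \|Z\|_X^4 .
\end{equation*}
Integrating yields uniform bounds of $w_n$ in $L^\infty(0,T;\cH) \cap L^2(0,T;\cH^{1/2}) \cap L^4(0,T;X)$. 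Together with $A w_n \in L^2(0,T;\cH^{-1/2})$ and $\cF(w_n+Z) \in L^{4/3}(0,T;X^*)$ (from trilinearity of $\cF$), one controls $\partial_t w_n$ in a suitable negative Sobolev space.

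Next I would invoke the Aubin--Lions lemma: the compact embedding $\cH^{1/2} \hookrightarrow \cH$ (from compactness of the semigroup, equivalently of the resolvent), combined with the bound on $\partial_t w_n$, gives strong convergence of a subsequence in $L^2(0,T;\cH)$, and hence almost everywhere, so one can pass to the limit in the nonlinearity $\cF(w_n+Z)$ to identify the limit as a weak solution $w$ with the required regularity. Setting $u = w + Z$ gives the desired weak solution of~\eqref{spde}, with continuous $\cH$-valued paths since $w \in C^0([0,T];\cH)$ by standard interpolation from $L^2(0,T;\cH^{1/2})$ and $\partial_t w \in L^2(0,T;\cH^{-1/2})$, and $Z$ is continuous.

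For uniqueness, the argument is given in the excerpt: two solutions $u_1, u_2$ have difference $d = u_1 - u_2$ satisfying $\partial_t d = Ad + \nu d + \cF(u_1) - \cF(u_2)$; by \eqref{e:stableF}, testing against $d$ gives $\tfrac12 \tfrac{d}{dt}\|d\|^2 \leq \nu \|d\|^2 - c\|d\|_X^4$ and Gronwall yields $d \equiv 0$ since $d(0) = 0$. The key technical point here is justifying the chain rule $\tfrac{d}{dt}\|d\|^2 = 2\langle \partial_t d, d\rangle$, which requires $d \in L^2(0,T;\cH^{1/2})$ and $\partial_t d \in L^2(0,T;\cH^{-1/2})$; this follows from the regularity of $u_i - Z$ combined with $d = (u_1-Z) - (u_2-Z)$, so the noise cancels and the Lions--Magenes lemma applies. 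The main obstacle is therefore the careful bookkeeping of function space regularity needed to make the energy identity rigorous, especially reconciling the $L^4(0,T;X)$ bound with the $\cH^{1/2}$-regularity via Aubin--Lions; the estimates themselves are standard pathwise energy arguments once the stochastic convolution has been absorbed into the data.
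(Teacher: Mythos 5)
Your proposal is correct and follows essentially the same route the paper sketches: the transformation $w=u-Z$, pathwise Galerkin approximation with energy estimates from \eqref{e:boundF} giving bounds in $L^\infty(0,T;\cH)\cap L^2(0,T;\cH^{1/2})\cap L^4(0,T;X)$, Aubin--Lions compactness to pass to the limit in the nonlinearity, and uniqueness from \eqref{e:stableF} via the energy identity justified by Lions--Magenes regularity. The paper states this theorem without a full proof, giving only this sketch, and your write-up fills in the same steps faithfully.
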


\subsection{Finite-time Lyapunov-exponents}

The linearization $D_{u_0} u(t,\omega,u_0)$ of~\eqref{spde1} around a solution $u(t,\omega,u_0)$ with deterministic initial condition $u_0$ is defined as the solution $v(t,\omega,u_0,v_0)$ of  the linear PDE called also the first variation equation, which due to the additive structure of the noise is given by see~\cite{BlEnNe:21,BlNe:22}
\begin{align}\label{linearization}
\begin{cases}
    \txtd v =  [Av + \nu v + D \cF(u) v]~\txtd t \\
    v(0)=v_0.
    \end{cases}
\end{align}
\begin{remark}
The Fr\'echet differentiablity of the solution operator $u_0\mapsto u(t,\omega,u_0)$ follows regarding that $u\in L^2(0,T;\cH^{1/2})$ due to~\cite[Lemma 4.4]{deb}.
\end{remark}
For $t>0$ we denote the random solution operator $U_{u_0}(t):\cH\to\cH$ such that $v(t)=U_{u_0}(t)v_0$, where $v$ is a solution of \eqref{linearization} given the initial condition $v_0\in \cH$.

\begin{remark}
Note that for any solution $u\in L^4(0,T,X)$ we have $\cF(u)\in L^{4/3}(0,T,X^*) \subset L^{4/3}(0,T,{\cH}^{-\alpha})$.
We can now use pathwise deterministic theory for linear PDEs. 
For example Galerkin methods show that for given $v_0\in\cH$ there is an (up to global null sets) unique stochastic process $v$ with continuous paths in $\cH$ and $v \in L^2(0,T;\cH^{1/2})$ for all $T>0$ that solves \eqref{linearization}. 
\end{remark}

We define the finite-time Lyapunov exponent as in~\cite{BlEnNe:21, BlNe:22}.

\begin{definition}{\em (Finite-time Lyapunov exponent)}. Let $t>0$ be fixed. We call a finite-time Lyapunov exponent for a solution $u$ of the SPDE with (random) initial condition $u_0$ 
\begin{equation}\label{ftle}
    \lambda_t(u_0):=\lambda(t,\omega,u_0) =\frac{1}{t} \ln \left( \| U_{u_0}(t) \|_{\cL(\cH)}\right).
\end{equation}
\end{definition}

From the definition it is clear that finite-time Lyaponov exponents measure local expansion rates of nearby solutions. Negative finite-time Lyapunov exponents indicate attraction whereas positive ones indicate that nearby solutions tend to separate on a finite time horizon.

\begin{remark}
\label{propLyap}
We can compute $\|U_{u_0}\|_{\cL(\cH)}$ as follows
\begin{eqnarray*}
\| U_{u_0}(t) \|_{L(\cH)}
&=& \sup\{ \|v(t)\| / \|v(0)\| \ : \  v \text{ solves \eqref{linearization} with }v(0)\not=0 \} \\
&=& \sup\{ \|v(t)\|  \ : \  v \text{ solves \eqref{linearization} with }\|v(0)\|=1 \}.
\end{eqnarray*}
\end{remark} 

\begin{remark} 
Let us comment on the following.
   \begin{itemize}
       \item [1)] The limit $t\to\infty$ in~\eqref{ftle} gives an asymptotic Lyapunov exponent. For SDEs perturbed by Brownian motion, one can show based on ergodic theorems that the asymptotic Lyapunov exponent exists and is independent of $\omega$. However, estimating this quantity for SPDEs is a highly challenging task, compare~\cite{GeTs:22,BeBlPs:22}.
       \item [2)] Both finite time and asymptotic Lyapunov exponents have not been investigated for S(P)DEs with fractional noise so far. 
       \item [3)] For technical reasons which will be explained in Section~\ref{four} we restrict ourselves to deterministic initial data $u_0$. The independence of $u_0$ from the fractional Brownian motion helps us.
   \end{itemize} 
\end{remark}


\section{Finite-time Lyapunov exponents for amplitude equations with fractional noise}\label{four}

In this setting $\sigma>0$ and $\nu\geq 0$ are fixed quantities that depend on a small parameter $\varepsilon$ and we assume the following upper bound:
\[ 
\nu=\cO(\varepsilon^2) \text{ and } \sigma=\cO(\varepsilon^{2H+1}). 
\]
Using the cubic nonlinearity and the interplay between $\nu$ and $\sigma$ we later obtain amplitude equations of two types. In case of $  \sigma \nu^{-\frac12-H}=\cO(1)$  we have
\begin{align}\label{a1}
    \txtd b = (b+\cF_c(b))~\txtd T +\frac{\sigma}{\nu^{\frac12+H}}~\txtd \beta^H_{\nu^{H}}(T)
\end{align}
whereas in the case $\nu^{H+1/2}\ll \sigma$
\begin{align}\label{a2}
\txtd b = \cF_c(b)~\txtd T +\txtd\beta^H_{\sigma^H}(T),
\end{align}
where $(\beta^H_\gamma(T))_{T\in[0,T_0]}$ is an $\cN$-valued fractional Brownian motion rescaled in time by a factor $\gamma$, meaning that $\beta^H_\gamma(T)=\gamma^{2H} \beta^H(T\gamma^{-2})$ for some fractional Brownian motion $\beta^H$.

\begin{lemma}{\em (Positive FTLE for~\eqref{a1})}\label{l1}
Fix $T_0>0$.
If $  \sigma \nu^{-\frac12-H}=\cO(1)$ 
then  there is an $\eta>0$ 
such that for 
$|b_0|<\eta$ sufficiently small,
then 
\[ \mathbb{P}\left( \lambda_T (b_0) \geq \frac{1}{4}  \right)>0 \quad\text{for all } T\in[0,T_0] . 
\]
\end{lemma}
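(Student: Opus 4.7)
Since $\cN$ is one-dimensional, both \eqref{a1} and its first variation $\dot v = (1 + D\cF_c(b))v$ are scalar ODEs (random in $b$), and integrating the variation equation gives
\[
\lambda_T(b_0) \;=\; \frac{1}{T}\int_0^T\bigl(1 + D\cF_c(b(s))\bigr)\,\txtd s .
\]
As $\cF_c$ is cubic, $D\cF_c$ is homogeneous quadratic, so $|D\cF_c(b)|\leq C b^2$ for some $C>0$. Fix $r>0$ with $Cr^2\leq 3/4$ and set $\Omega_r := \{\sup_{T\in[0,T_0]}|b(T)|\leq r\}$. On $\Omega_r$ the integrand above is at least $1/4$ pointwise, so $\lambda_T(b_0)\geq 1/4$ for every $T\in(0,T_0]$. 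The lemma therefore reduces to exhibiting $\eta>0$ such that $\P(\Omega_r)>0$ whenever $|b_0|\leq \eta$.

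Compare $b$ with the deterministic skeleton $b_d$ solving $\dot{b}_d = b_d + \cF_c(b_d)$, $b_d(0)=b_0$. The sign condition $b\cF_c(b)\leq 0$ extracted from~\eqref{e:sign} (with $u=b$ and $h=b$) gives $\tfrac12 \partial_T b_d^2 \leq b_d^2$, hence $|b_d(T)|\leq|b_0|\txte^T$. Picking $\eta \leq r\txte^{-T_0}/2$ forces $\sup_{T\leq T_0}|b_d(T)|\leq r/2$ whenever $|b_0|\leq\eta$, so we only need to control the deviation $b-b_d$.

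Self-similarity identifies the driver $\tfrac{\sigma}{\nu^{1/2+H}}\beta^H_{\nu^H}$ on $[0,T_0]$ in law with $c\beta^H|_{[0,T_0]}$ for $c=\sigma\nu^{-1/2-H}=\cO(1)$, and $b_d$ is exactly the skeleton obtained by substituting the zero path for the driver. The support theorem~\cite[Proposition 5.8]{HaOh:07}, together with the Gronwall-based continuity of the solution map of \eqref{a1} with respect to its additive driver, places $b_d$ in the support of the law of $b$ in $C([0,T_0];\cN)$. Consequently
\[
\P\bigl(\sup_{T\leq T_0}|b(T)-b_d(T)|<r/2\bigr)>0 ,
\]
and on this event $|b(T)|\leq |b(T)-b_d(T)|+|b_d(T)|\leq r$ for all $T\in[0,T_0]$, i.e.\ $\Omega_r$ holds, yielding $\P(\Omega_r)>0$.

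The delicate step is the support-theorem step, since one must verify its hypotheses uniformly in $H\in(0,1)$ (the low-$H$ regime formally requires a rough-paths reading of the driver, although the additivity of the noise and the smoothness of the drift trivialise this). A self-contained alternative circumvents the theorem by changing variables to $y:=b-\tfrac{\sigma}{\nu^{1/2+H}}\beta^H_{\nu^H}$, which solves a random ODE with continuous forcing, and using the small-ball estimate $\P(\sup_{T\leq T_0}|\beta^H(T)|<\delta)>0$ for every $\delta>0$; a direct Gronwall argument on $y$ then produces $\Omega_r$ for $\eta$ and $\delta$ sufficiently small.
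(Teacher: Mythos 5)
Your argument is correct and follows essentially the same route as the paper: reduce $\lambda_T(b_0)$ to $\frac1T\int_0^T(1+D\cF_c(b))\,\txtd s$, and obtain a positive-probability event on which $\sup_{[0,T_0]}|b|$ stays below a fixed threshold by combining the deterministic growth bound $|b_0|e^T$ with a small-ball/support argument for the fractional driver. The paper's proof works directly on the event $\{\sup_{[0,T]}|\beta^H_{\nu^H}|\le\eta/2\}$ and cites the support theorem of Hairer--Ohashi as the alternative, which is exactly the pair of routes you present (in the opposite order).
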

\begin{proof}
Let us think of $b_0$ being random and
introduce the sets 
\[A_1:=\{ \omega\in\Omega: b_0(\omega) \in(-\eta,\eta) \},\]
\[A_2:=\{ \omega\in\Omega: \sup\limits_{t\in[0,T]} \beta^H_{\nu^H}(t)\leq \frac{\eta}{2} \}.
\] 
Here $b_0$ is the initial data of~\eqref{a1}, $\eta>0$ is small and $\beta^H_{\nu^H}$ is just a rescaled fbm. Note that due to the non-Markovianity of the noise the events $A_1$ and $A_2$ are in general not independent as in the Brownian case. 
Therefore in order to guarantee that $\P(A_1\cap A_2)>0$ we consider only deterministic initial conditions $b_0$ (independent of the fractional Brownian motion) and restrict ourselves to $\omega\in\Tilde{\Omega}:=A_2$ where the fractional Brownian motion remains small for a finite-time horizon. In this case we derive for $\omega\in \tilde{\Omega}$ that
\[ 
|b(T)| \leq \Big(1+\frac{\sigma}{\nu^{\frac12+H}}\Big)\eta e^T<\delta \text{ for all }T\in[0,T_0]. 
\]
This statement follows using the equation~\eqref{a1}, the stable cubic nonlinearity and the fact that $\omega\in \tilde{\Omega}$. Alternatively one can apply the support theorem~\cite[Proposition 5.8]{HaOh:07} which states that for every deterministic initial data and every path of the fbm, the solution of~\eqref{a1} will reach a small neighborhood of the origin with positive probability. This information combined with the fact that the noise remains bounded on the finite-time interval $[0,T]$ which we consider, provides the upper bound on $b$.  
Analogously to the case of the Brownian motion, such a bound on the solutions implies the positivity of the FTLEs on the set of positive probability $\tilde{\Omega}$ since
\[
\lambda_T (b_0) =\frac{1}{T} \ln \Bigg( \exp \Bigg( T + \int_0^T D\cF_c(b(s,\omega))~\txtd s\Bigg) \Bigg) \geq \frac{1}{4},  
\]
choosing $\delta:=\frac12$.
\qed
\end{proof}

For our result we cannot rely on Birkhoffs ergodic theorem, instead we use a simple argument similar to~\cite[Theorem 3.4]{Bl:07} to show that the set of times for which the amplitude equation is small has a small probability. There it was used for $\nu\leq0$ in order to show pattern formation below the threshold of stability.  

\begin{lemma}{\em (Negative FTLE for~\eqref{a2})}\label{l2}
Suppose that $\nu^{H+1/2}\ll \sigma$, fix $T_0>0$, and consider a solution of \eqref{a2}. Then for all $\theta>0$
there is a small positive time  $T_\theta\to0$ for $\theta\to0$ such that  uniformly
for all $T\in[T_\theta,T_0]$ 
\[ \mathbb{P}\left( \lambda_T(b_0) <-c\theta^2\right) \to 1 \text{ as } \theta\to 0.
\] 
\end{lemma}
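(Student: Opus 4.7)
Because $\cN$ is one-dimensional, the sign condition~\eqref{e:stableF} implies $\cF_c(b)=-\gamma b^3$ for some $\gamma>0$, so that $D\cF_c(b)=-3\gamma b^2\leq 0$. The linearization of~\eqref{a2} is then a scalar ODE, and the plan is to use the explicit FTLE expression
\begin{equation*}
\lambda_T(b_0) \;=\; \frac{1}{T}\int_0^T D\cF_c(b(s))~\txtd s \;=\; -\frac{3\gamma}{T}\int_0^T b(s)^2~\txtd s .
\end{equation*}
This turns the event $\{\lambda_T(b_0)<-c\theta^2\}$ into the simple lower bound $\int_0^T b(s)^2~\txtd s \geq \tfrac{c}{3\gamma}\theta^2 T$, so the proof reduces to establishing this integral lower bound with probability tending to $1$ as $\theta\to 0$, uniformly for $T\in[T_\theta,T_0]$.

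Introducing the small-set measure $A_\theta(T):=\bigl|\{s\in[0,T]:|b(s)|<\theta\}\bigr|$, the trivial inequality $\int_0^T b(s)^2~\txtd s \geq \theta^2(T-A_\theta(T))$ shows that it is enough to prove a first-moment bound $\E[A_\theta(T)]=o(T)$ uniformly for $T\in[T_\theta,T_0]$ and then apply Markov's inequality.

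The core step, adapted from~\cite[Theorem 3.4]{Bl:07}, is a quantitative escape estimate on a window of length $h$. If $|b(s)|<\theta$ for every $s\in[t,t+h]$, then the integral form of~\eqref{a2} together with the bound $|\cF_c(b)|\leq C\theta^3$ on that window forces the fbm increment to satisfy
\begin{equation*}
\bigl|\beta^H_{\sigma^H}(t+h)-\beta^H_{\sigma^H}(t)\bigr| \;\leq\; 2\theta + Ch\theta^3 ,
\end{equation*}
an event of probability at most $C\theta h^{-H}$ by the Gaussian density combined with stationarity and self-similarity of the fbm. A Fubini argument turns this into the pointwise bound $\P(|b(s)|<\theta)\leq C\theta s^{-H}$ for $s$ bounded away from $0$, and with the choice $T_\theta=\theta^{1/H}\eta(\theta)^{-1}$ for a slowly vanishing $\eta(\theta)\to 0$ one obtains $\E[A_\theta(T)]\leq T_\theta + C\theta\int_{T_\theta}^{T} s^{-H}~\txtd s = o(T)$ uniformly in $T$.

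The main obstacle is that, unlike the Brownian case treated in~\cite[Lemma 4.2]{BlNe:23}, no ergodic theorem is at hand for~\eqref{a2}: there is no stationary random equilibrium and no Birkhoff-type averaging, so the bound $\E[A_\theta(T)]=o(T)$ must be carried entirely by the quantitative escape estimate. This is tight at the lower boundary $T\approx T_\theta$ and will dictate the admissible rate of $\eta(\theta)\to 0$. Additionally, the small-ball estimate for the fbm increment behaves qualitatively differently for $H<1/2$ and $H>1/2$ because of the change of sign in the correlations of increments, so the argument is expected to split into sub-cases with slightly different covariance calculations, and obtaining the bound uniformly in $H\in(0,1)$ is the most delicate point.
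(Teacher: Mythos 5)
Your skeleton is the same as the paper's: write $\lambda_T(b_0)=\frac1T\int_0^T D\cF_c(b(s))\,\txtd s\le -\frac{c\theta^2}{T}\bigl(T-|\{s\le T:|b(s)|\le\theta\}|\bigr)$, reduce everything to showing that the occupation time of $[-\theta,\theta]$ is small with high probability, and close with Markov's inequality applied to (essentially) the first moment $\int_0^{T_0}\P(|b(s)|\le\theta)\,\txtd s$. The paper sets $T_\theta=2\sqrt{p_\theta}$ with $p_\theta=\int_0^{T_0}\P(|b(s)|\le\theta)\,\txtd s$ and gets $\P(\mathcal T_\theta(T_0)\ge\sqrt{p_\theta})\le\sqrt{p_\theta}$, which is exactly your plan.

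Where you diverge is in how $p_\theta\to0$ is established, and that step has a gap. Your window argument controls the event that $|b|$ stays below $\theta$ on an \emph{entire} interval $[t,t+h]$ (only then is the fbm increment forced to be of size $\cO(\theta+h\theta^3)$, giving the $C\theta h^{-H}$ small-ball bound). No Fubini argument converts this into the pointwise bound $\P(|b(s)|<\theta)\le C\theta s^{-H}$: the solution can enter and leave $[-\theta,\theta]$ without spending a full window of length $h$ there, so $\{|b(s)|<\theta\}$ is not contained in any event your escape estimate controls. A pointwise bound at a fixed time is really a density estimate for the law of $b(s)$, which is a different (Malliavin-type) input. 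The paper sidesteps this entirely with a soft argument: since $\P(|b(s)|=0)=0$ (the law of $b(s)$ has no atom at the origin, by the density results of~\cite{BNT:16}), dominated convergence gives $p_\theta\to0$ with no rate, which is all the lemma requires --- note the statement only asks for \emph{some} $T_\theta\to0$, not a rate like your $\theta^{1/H}\eta(\theta)^{-1}$. Relatedly, your concern about the sign of the increment correlations for $H\lessgtr1/2$ is moot: a single-increment Gaussian bound needs only the variance $h^{2H}$, and the paper's soft route needs no small-ball estimate at all. If you want a quantitative $T_\theta$ your escape estimate would have to be reorganized as a genuine occupation-time bound (as in~\cite[Theorem 3.4]{Bl:07}, covering $[0,T_0]$ by windows and counting windows where $b$ stays small), not as a pointwise-in-$s$ probability bound.
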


 \begin{corollary}
 Under the assumptions of Lemma \ref{l2} for a fixed $T$ we have $\mathbb{P}\left( \lambda_T(b_0) < 0 \right)=1.$
  \end{corollary}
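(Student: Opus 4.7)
The plan is to deduce this corollary directly from Lemma~\ref{l2} via a monotone limit argument in $\theta$. The point is that Lemma~\ref{l2} gives a whole family of upper estimates $\lambda_T(b_0)<-c\theta^2$, each valid with high probability, and the event $\{\lambda_T(b_0)<0\}$ is just the union of these events over all $\theta>0$.

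First, I would fix $T\in(0,T_0]$. Because $T_\theta\to 0$ as $\theta\to 0$, I can restrict to $\theta>0$ small enough that $T_\theta\leq T$, so that Lemma~\ref{l2} applies at the prescribed $T$ and yields
\[
\mathbb{P}\bigl(\lambda_T(b_0)<-c\theta^2\bigr)\longrightarrow 1 \quad\text{as } \theta\to 0.
\]
Next, I would note the trivial monotonicity: for $0<\theta_1<\theta_2$ one has $-c\theta_1^2>-c\theta_2^2$, hence
\[
\{\lambda_T(b_0)<-c\theta_2^2\}\subset\{\lambda_T(b_0)<-c\theta_1^2\}\subset\{\lambda_T(b_0)<0\}.
\]
Taking the union over a sequence $\theta_n\downarrow 0$ gives $\bigcup_n\{\lambda_T(b_0)<-c\theta_n^2\}=\{\lambda_T(b_0)<0\}$, since $\lambda_T(b_0)<0$ if and only if there exists some $\theta>0$ with $\lambda_T(b_0)<-c\theta^2$.

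Finally, I would apply continuity of the probability measure from below along the increasing sequence of events,
\[
\mathbb{P}\bigl(\lambda_T(b_0)<0\bigr)=\lim_{n\to\infty}\mathbb{P}\bigl(\lambda_T(b_0)<-c\theta_n^2\bigr)=1,
\]
where the last equality uses the conclusion of Lemma~\ref{l2}. I do not see any serious obstacle here; the only point that needs a line of justification is that $T_\theta\to 0$ permits one to keep $T$ fixed while letting $\theta\to 0$, which is exactly what the statement of Lemma~\ref{l2} provides.
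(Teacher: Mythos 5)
Your argument is correct and is essentially the paper's own proof: the paper likewise fixes $T$, chooses $\theta_0$ so that $T_\theta<T$ for $\theta\in(0,\theta_0)$, and passes to the limit via $\mathbb{P}(\lambda_T(b_0)<0)\geq\mathbb{P}(\lambda_T(b_0)<-c\theta^2)\to 1$. Your extra step identifying $\{\lambda_T(b_0)<0\}$ as the increasing union and invoking continuity from below is a slightly more elaborate way of saying the same thing.
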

 For the proof just note that  we can choose $\theta_0$ such that $T_\theta<T$ for all $\theta\in(0,\theta_0)$. But now we have 
 \[\mathbb{P}\left( \lambda_T(b_0) < 0 \right) \geq \mathbb{P}\left( \lambda_T(b_0) <-c\theta^2\right) \to 1
 \quad \text{for } \theta \to 0. \]

   \begin{remark}
    In the proof we will see that $T_\theta=2\sqrt{p_\theta}$  which we will choose later by  \eqref{e:ptheta}.
    Let us remark that we expect 
    $p_\theta\approx c\theta$, 
    as we can approximate the probability by $2p(s) \theta$ where $p(s)$ is the value of the density for $b(s)$ in $0$.
    Thus we could work out a qualitative 
    bound if we have more knowledge about
    the density of $b$.  
   \end{remark}

\begin{proof}
 In order to prove the statement  we have to make sure that the solution $b$ does not stay too close to zero for too many times.
   The linearization of~\eqref{a2} around a solution $b$ entails in this case
\[ \txtd \varphi =D\cF_c(b)~\txtd t. \]
We have to show that there exists a constant $\tilde{c}>0$ such that
\[
\lambda_T(b_0(\omega)) =\frac{1}{T} \ln \exp \Big( \int_0^T D\cF_c(b(s,\omega ))~\txtd s\Big) < -\tilde{c}<0~\quad \text{ for some }\omega \text{ and } T.
\]
Following~\cite[Theorem 3.4]{Bl:07} we define the set of times for which the solution of the amplitude equation is small, i.e. for $T_0>0$ and $\theta>0$
   \[ \mathcal{T}_\theta(T_0):= |\{ s\in[0,T_0] : |b(s)|\leq \theta \} | 
   \]  
   and notice that
   \[  \mathcal{T}_\theta(T_0) =\int_0^{T_0} \mathbbm{1}_{\{|b(s)|\leq \theta\}}~\txtd s.
   \]
Since $\mathcal{T}_\theta\in [0,T_0]$~a.s.\ we can bound arbitrary moments of $\mathcal{T}_\theta$. We start by an exponential moment bound. Let $c>0$ and obtain due to Jensen's inequality
\begin{align*}
    \E e^{c  \mathcal{T}_\theta(T_0)} &\leq \frac{1}{T_0} \int_0^{T_0} \E \exp (cT_0 \mathbbm{1}_{\{|b(s)|\leq \theta\} } )~\txtd s\\
    & = \frac{1}{T_0} \int_0^{T_0} \P (|b(s)|> \theta )~\txtd s +\frac{1}{T_0} e^{cT_0} \int_0^{T_0} \P(|b(s)|\leq \theta )~\txtd s\\
    &  = 1 + \frac{e^{cT_0} -1}{T_0}\int_0^{T_0} \P (|b(s)|\leq \theta )~\txtd s.
\end{align*}
   Now since $\P( |b(s)|=0 )=0$ (since the density of the amplitude equation is not concentrated in zero, see for e.g.~\cite[Theorem 1.2, (I)]{BNT:16}) it follows by dominated convergence that 
   \begin{equation}
   \label{e:ptheta}
          p_\theta:= \int_0^{T_0} \P( |b(s)|\leq \theta )~\txtd s\to 0 \text{ as }\theta\to 0.  
   \end{equation}

   Here we also use that the law of $b$ is independent of $\theta$. Therefore letting $0<\gamma_\theta\ll 1$ 
   and applying Markov's inequality for the function $x\to e^{cx}-1$, $x\geq0$ we get
\[
\P(\mathcal{T}_\theta(T_0)\geq \gamma_\theta) 
\leq \frac{\E e^{c  \mathcal{T}_\theta(T_0)}-1}{e^{c \gamma_\theta}-1}  \leq \frac{e^{cT_0}-1}{e^{c\gamma_\theta}-1} \cdot \frac{p_\theta}{T_0}\to\frac{p_\theta }{\gamma_\theta}\quad
\text{ for } c\to 0. 
\]
Setting $\gamma_\theta=\sqrt{p_\theta}$ we obtain that
\[ 
\P(\mathcal{T}_\theta(T_0) \geq \sqrt{p_\theta})\leq\sqrt{p_\theta} . 
\]
In conclusion using $D\cF_c(b)=-cb^2$ for $T\in[2\sqrt{p_\theta},T_0]$
\[ 
\lambda_{T}(b_0) 
=\frac{1}{T} \int_0^{T} D\cF_c(b(s,\omega ))~\txtd s
\leq - \frac{c}{T}(T-\mathcal{T}_\theta(T)) \theta^{2}
\leq - \frac{c}{2}\theta^{2}
<0  \] 
on the set of large probability 
$\Omega_\theta:=\{ \mathcal{T}_\theta(T_0)<\sqrt{p}_\theta \}$, as we have $\P(\Omega_\theta)\geq 1- \sqrt{p_\theta}$.
\qed
\end{proof}


\section{Approximation of SPDEs with fractional noise via amplitude equations}\label{sec:approx}
Here we prove an approximation result for the SPDE~\eqref{spde1} which is different from the one derived in~\cite{BlNe:22}.
We recall that $T_\varepsilon=T\varepsilon^{-2}$, consider $\nu\geq 0$ and fix $\varepsilon\in(0,\varepsilon_0]$ for some $\varepsilon_0>0$ sufficiently small. 

\begin{assumptions}
    We assume that we have the upper bounds
    $\nu=\cO(\varepsilon^2)$ and $\sigma=\cO(\varepsilon^{2H+1})$.
\end{assumptions}

\begin{ansatz} The process $b$ is an $\cN$-valued process which solves the amplitude equation
\begin{align}\label{ae:1}
    \txtd b = [\nu \varepsilon^{-2} b + \cF_c(b)]~\txtd T + \sigma \varepsilon^{-2H-1}~\txtd \beta^H(T),
\end{align}
where $\beta^H(T)=\varepsilon^{2H} P_cW^H(\varepsilon^{-2}T)$ is a rescaled fractional Bronwian motion.
\end{ansatz}

\begin{theorem}\label{thm:aprox}
Let $u$ be a solution of the SPDE~\eqref{spde1} with initial condition $u_0=\cO(\varepsilon)$ in $\cH$
such that $P_su_0=\cO(\varepsilon^{2H+1})$  in $\cH$. Further, let $b$ be a solution of\eqref{ae:1} with $b(0)- \varepsilon^{-1} u_0 = \cO(\varepsilon^{2H})$. 
Then 
\begin{align}\label{error}u-\varepsilon b(\varepsilon^2\cdot) = \cO(\varepsilon^{2H+1 -\kappa} +\varepsilon^{2-\kappa})
\quad\text{ on }[0,T_\varepsilon]\text{ in }\cH \text{ for all small } \kappa>0.
\end{align}
\end{theorem}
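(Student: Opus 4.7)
The plan is to set $a(t):=\varepsilon b(\varepsilon^2 t)$ and control the residual $R(t):=u(t)-a(t)$ by a pathwise deterministic energy estimate after subtracting the stable stochastic convolution. First I would check that on the original time scale $a$ itself satisfies
\benn
\txtd a = [\nu a + \cF_c(a)]\,\txtd t + \sigma\,\txtd P_c W^H_t,
\eenn
which follows from~\eqref{ae:1} by the trilinearity of $\cF_c$, the identity $Aa=0$, and the rescaling $\beta^H(T)=\varepsilon^{2H}P_c W^H(\varepsilon^{-2}T)$. Subtracting from~\eqref{spde1} gives the semilinear SPDE
\benn
\txtd R = [AR + \nu R + \cF(u) - \cF_c(a)]\,\txtd t + \sigma P_s\,\txtd W^H_t,\qquad R(0)=u_0-\varepsilon b(0),
\eenn
and the hypotheses on $P_su_0$ and $b(0)$ yield $\|R(0)\|_\cH=\cO(\varepsilon^{2H+1})$ since $P_sR(0)=P_su_0$ and $P_cR(0)=-\varepsilon(b(0)-\varepsilon^{-1}u_0)+P_su_0$.

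I would then remove the remaining noise by introducing the stable stochastic convolution $\tilde Z$ solving $\txtd\tilde Z=(A+\nu P_s)\tilde Z\,\txtd t+P_s\,\txtd W^H_t$ with $\tilde Z(0)=0$, and setting $\xi:=R-\sigma\tilde Z$, so that $\xi$ solves the pathwise PDE
\benn
\partial_t\xi=A\xi+\nu\xi+\cF(u)-\cF_c(a),\qquad \xi(0)=R(0).
\eenn
Assumption~\ref{o:z} combined with $\sigma=\cO(\varepsilon^{2H+1})$ yields $\|\sigma\tilde Z\|_X=\cO(\varepsilon^{2H+1-\kappa})$ uniformly on $[0,T_\varepsilon]$, reducing the task to controlling $\xi$ in $\cH$.

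The core step is an energy estimate for $\|\xi\|_\cH^2$. Decomposing
\benn
\langle\cF(u)-\cF_c(a),\xi\rangle = \langle\cF(u)-\cF(a),u-a\rangle - \langle\cF(u)-\cF(a),\sigma\tilde Z\rangle + \langle P_s\cF(a),\xi_s\rangle,
\eenn
I would use the stability condition~\eqref{e:stableF} to extract the dissipation $-c\|u-a\|_X^4$ from the first pairing, Young's inequality on the second to absorb most of it into this dissipation while leaving a forcing polynomial in $\sigma\|\tilde Z\|_X$ and $\|a\|_X=\cO(\varepsilon)$, and handle $\langle P_s\cF(a),\xi_s\rangle$ via $\|P_s\cF(a)\|_{X^*}\le C\|a\|_X^3=\cO(\varepsilon^3)$ together with the smoothing and exponential decay of $e^{tA}P_s$. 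Combined with $\langle A\xi,\xi\rangle\le 0$ and a Gronwall argument over $[0,T_\varepsilon]$, this produces $\sup_{[0,T_\varepsilon]}\|\xi\|_\cH=\cO(\varepsilon^{2H+1-\kappa}+\varepsilon^{2-\kappa})$: the first term captures the initial data and the $\sigma\tilde Z$ forcing, while the second arises from the integrated deterministic inhomogeneity $P_s\cF(a)$ and the nonlinear interactions of the ansatz over the long timescale $T_\varepsilon=T\varepsilon^{-2}$. Adding back $\sigma\tilde Z$ then gives~\eqref{error}.

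The main technical obstacle is making the energy estimate sharp enough that the long horizon $[0,T\varepsilon^{-2}]$ and the various forcing terms do not inflate the error beyond $\varepsilon^{2H+1-\kappa}+\varepsilon^{2-\kappa}$, while transparently tracking the $H$-dependence coming through $\sigma$ and the stochastic convolution. A secondary ingredient is an a priori control $\|b\|_X=\cO(1)$ on $[0,T_0]$, which guarantees $\|a\|_X=\cO(\varepsilon)$ and which follows from an energy estimate on~\eqref{ae:1} using the stability of $\cF_c$ against the $\cO(1)$ driving $\sigma\varepsilon^{-2H-1}\txtd\beta^H$.
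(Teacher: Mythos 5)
Your setup is sound and shares the skeleton of the paper's argument: the ansatz $a=\varepsilon b(\varepsilon^2\cdot)$ does satisfy $\txtd a=[\nu a+\cF_c(a)]\,\txtd t+\sigma\,\txtd P_cW^H_t$ by trilinearity and the self-similar rescaling, so the residual $R=u-a$ is driven only by the stable noise $\sigma P_s\,\txtd W^H$, and $\|R(0)\|=\cO(\varepsilon^{2H+1})$ as you say. The gap is in the core step: a single energy estimate for $\|\xi\|_\cH^2$ with $\xi=R-\sigma\tilde Z$ does not deliver the claimed rate. The cross term $\langle\cF(u)-\cF(a),\sigma\tilde Z\rangle$ that your decomposition leaves behind is bounded at best by $C(\|u\|_X^2+\|a\|_X^2)\|u-a\|_X\,\sigma\|\tilde Z\|_X$; after Young's inequality and integration over the long horizon $[0,T_\varepsilon]=[0,T\varepsilon^{-2}]$, using the only available a priori controls $\sup\sigma\|\tilde Z\|_X=\cO(\varepsilon^{2H+1-\kappa})$ and $\int_0^{T_\varepsilon}\|u\|_X^4\,\txtd t=\cO(\varepsilon^2)$ (there is no uniform $X$-bound on $u$ of order $\varepsilon$), this contributes $\cO(\varepsilon^{2H+2-\kappa})$ to $\|\xi\|^2$, i.e.\ only $\|\xi\|=\cO(\varepsilon^{H+1-\kappa})$. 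That is strictly weaker than $\cO(\varepsilon^{2H+1-\kappa}+\varepsilon^{2-\kappa})$ for every $H\in(0,1)$. The underlying problem is that you keep the $\cN$- and $\cS$-components of the error coupled inside one quadratic functional: on $\cN$ there is no dissipation from $A$ to absorb these cross terms, while on $\cS$ the error is not really an ``error'' at all but essentially $\sigma Z_s$ itself.

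The paper's proof avoids this by a three-step separation that you should adopt. First, the a priori bounds $u=\cO(\varepsilon)$ in $\cH$ and $\int_0^{T_\varepsilon}\|\tilde u\|_X^4\,\txtd t=\cO(\varepsilon^2)$. Second, a dedicated estimate on the stable part exploiting the spectral gap of $A$ on $P_s\cH$, giving $u_s=\cO(\varepsilon^{2H+1-\kappa})$ uniformly and, crucially, $\int_0^{T_\varepsilon}\|\tilde u_s\|_X^4\,\txtd t=\cO(\varepsilon^{4H+2})$ (resp.\ $\cO(\varepsilon^4)$ for $H>1/2$); on $\cS$ the error in \eqref{error} is then simply $\|u_s\|$ itself, since $b$ lives in $\cN$. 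Third, the error $e=b-\varepsilon^{-1}u_c(\varepsilon^{-2}\cdot)$ is estimated on the slow time scale, where the center noise cancels exactly, so $e$ solves a pathwise ODE whose only forcing is the nonlinear coupling to $u_s$; expanding the cubic and using \eqref{e:stableF} on $\cF_c(b)-\cF_c(u_c^{(\varepsilon)})$, this forcing is controlled precisely by the $L^4(0,T_0;X)$-bound on $u_s^{(\varepsilon)}$ from step two, yielding $\|e\|=\cO(\varepsilon^{2H}+\varepsilon)$ and hence the stated rate. Your observation that the center noise is absorbed by the ansatz is exactly the right one, but it pays off only if you then estimate the $\cN$-error separately rather than folding it back into a global $\cH$-energy estimate.
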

\begin{remark}
    We notice that for $H>1/2$ the order of the error term is $\cO(\varepsilon^{2-})$, which is given by certain nonlinear terms,  whereas for $H<1/2$ we get $\cO(\varepsilon^{2H+1-})$ which arises from bounding the stochastic convolution.  
\end{remark}

\begin{proof} 
The proof is similar to and~\cite[Theorem 4.15]{BlNe:22} and~\cite[Theorem 5.3]{BlNe:23}.
Nevertheless, we repeat it here to get the  scaling of all the error terms correctly.

First of all we show that if $u_0=\cO(\varepsilon)$ in $\cH$, 
then $u=\cO(\varepsilon)$ in $\cH$ on $[0,T_\varepsilon]$ regarding that $\sigma Z=\cO(\varepsilon)$ on $[0,T_\varepsilon]$ (since $P_s Z=\cO(T^\kappa)$ and $P_cZ=\cO(T^{H})$ on $[0,T_0]$).

Using the standard transformation
$\tilde{u}:=u - \sigma Z$ we obtain the partial differential equation with random coefficients
\[
\partial_t \tilde{u} 
= A\tilde{u} + \nu (\tilde{u}+ \sigma Z) + \cF(\tilde{u}+\sigma Z). 
\]

Due to~\eqref{e:boundF} using Young's inequality we obtain the estimate 
\begin{eqnarray*}
\frac12\partial_t \|\tilde{u}\|^2 
&\leq & \nu \langle \tilde{u}+\sigma Z , \tilde{u}\rangle + \langle \cF(\tilde{u}+\sigma Z) , \tilde{u}\rangle  \\ 
&\leq & \nu \|\tilde{u}\|^2+ \nu\sigma \langle Z,\tilde{u}\rangle + C\sigma^4\|Z\|^4_X - \delta \|\tilde{u}\|^4_X \\
&\leq& \frac12(\nu\|\tilde{u}\|^2 -\delta \|\tilde{u}\|^4_X )
+ C(\nu\sigma^2 \| Z \|^2  +\sigma^4\|Z\|^4_X) \\
&\leq&  C(\nu^2+\nu\sigma^2 \| Z \|^2  +\sigma^4\|Z\|^4_X) = \cO(\varepsilon^4).
\end{eqnarray*}
Here we used that $\sigma Z=\cO(\varepsilon)$ on $[0,T_\varepsilon]$ since $\sigma=\cO(\varepsilon^{2H+1})$, where $T_\varepsilon=\cO(\varepsilon^{-2})$ and that $\nu=\cO(\varepsilon^2)$. 
This completes the proof of the first step.
Additionally, we can also conclude that 
\[
\frac12\partial_t \|\tilde{u}\|^2  = - \frac{\delta}4 \|\tilde{u}\|^4_X +  \cO(\varepsilon^4),
\]
which gives the $L^4(0,T_\varepsilon,X)$ bound on $\tilde{u}$
\begin{equation}
\label{e:bound-e-int}
 \int_0^{T_\varepsilon} \|\tilde{u}(t)\|^4_X~\txtd t 
 = \frac2{\delta}\|\tilde{u}(0)\|^2 +\cO(\varepsilon^4) 
 =\cO(\varepsilon^2). 
\end{equation}
In particular we notice that since $\sigma Z=\cO(\varepsilon)$ the estimates of $\tilde{u}$ do not depend on $H$. 

Furthermore, following the steps of the proof of~\cite[Theorem 5.3]{BlNe:23} and regarding that $P_su_0=\cO(\varepsilon^{2H+1})$ and the properties of the cubic term, we can derive that $u_s:=P_s u=\cO(\varepsilon^{2H+1})$ in $\cH$ on $[0,T_\varepsilon]$ and $\int_0^{T_\varepsilon} \|\tilde{u}_s(t)\|^4_X~\txtd t =\cO(\varepsilon^{4H+2})$, consequently $\int_0^T \|\tilde{u}_s(\varepsilon^{-2}t)\|^4_X~\txtd t = \cO(\varepsilon^{4H+4})$. For the convenience of the reader we prove these statement. To this aim we first use the splitting
\[
u=  P_cu+P_s u:= u_c+ u_s,
\] 
and define $Z_s:=P_s Z$. Again we use the standard transformation $\tilde{u}=u - \sigma Z$ so that 
\[
\tilde{u}_s=u_s-\sigma Z_s = P_s \tilde{u}. 
\] 
Thus taking the stable projection $P_s$ entails
\[
\partial_t \tilde{u}_s
= A \tilde{u}_s+\nu (\tilde{u}_s+\sigma Z_s)  +P_s\cF(u_c+\sigma Z_s+\tilde{u}_s).
\]
The Assumption~\ref{a} implies that the quadratic form of $A$ on the unit sphere of $P_s\cH$ is bounded from below by a positive constant. 
Therefore we further obtain 
\[
\frac12 \partial_t \|\tilde{u}_s\|^2
\leq  -c\| \tilde{u}_s\|^2 + \nu \|\tilde{u}_s\|^2
+ \nu \langle  \tilde{u}_s,\sigma Z_s\rangle + \langle \cF(u_c+\sigma Z_s+\tilde{u}_s),\tilde{u}_s \rangle.
\]
Using~\eqref{e:boundF} together with the fact that $\varepsilon_0$ is sufficiently small and thus $\nu=\cO(\varepsilon^2)$ is small, we derive the energy estimate

\begin{equation}
\label{e:PsuinX}
    \frac12 \partial_t \|\tilde{u}_s\|^2
\leq  -\frac{c}2 \| \tilde{u}_s\|^2 + C\nu \sigma^2 \|Z_s\|^2
 + C(\|u_c\|^4_X+ \sigma^4\|Z_s\|^4_X) 
-\delta \|\tilde{u}_s\|^4_X,
\end{equation}
for two universal constants $c,C>0$. 
Hence, via a Gronwall type estimate, for all $t\leq T_\varepsilon$
\[
\|\tilde{u}_s(t)\|^2 
\leq \|\tilde{u}_s(0)\|^2 
+ C\int_0^{t} e^{-c(t-\tau)}
\Big( \nu \sigma^2 \|Z_s\|^2
 + \|u_c\|^4_X+ \sigma^4\|Z_s\|^4_X\Big) ~\txtd \tau .
\]
We use that all norms are equivalent on $\cN$ together with the bounds $\sigma =\cO(\varepsilon^{2H+1})$, $\nu=\cO(\varepsilon^2)$, $\|Z_s\|_X=\cO(T^\kappa)$ and $P_su_0=\cO(\varepsilon^{2H+1})$
to obtain 
\[
\|\tilde{u}_s\|^2 =\cO(\varepsilon^{4H+2}) \text{ on }[0,T_\varepsilon].
\]
Thus 
\[
\|{u}_s\| 
\leq \|\tilde{u}_s\| + \sigma\|Z_s\| 
=\cO(\varepsilon^{2H+1}) + \cO(\varepsilon^{2H+1-\kappa})\text{ on }[0,T_\varepsilon]
\]
which bounds the error on $P_s\cH$. 
\begin{remark}
We notice that if $H>1/2$ the order of $\|u_s\|$ is $\cO(\varepsilon)$. This follows since $4H+2>4$ and $\|u_c\|\leq \|\tilde{u}_c\|+\sigma\|Z_c\|=\cO(\varepsilon)$ is the lower order term appearing in the integral~\eqref{e:PsuinX}. We will see in Section~\ref{six} that the error term appearing in the computation of the FTLEs will always be $\cO(\varepsilon)$ if $H>1/2$.
\end{remark}

Moreover, from the previous inequality we can infer bounds on $P_s u$ in $X$. 
From \eqref{e:PsuinX} we also obtain by integration
\[
\delta \int_0^t\|\tilde{u}_s\|^4_X~\txtd t
\leq  \|\tilde{u}_s(0) \|^2 + C \int_0^t (\nu \sigma^2 \|Z_s\|^2
 + \|u_c\|^4_X+ \|Z_s\|^4_X)~\txtd  t
\]
and thus for $H<1/2$
\[
\int_0^{T_\varepsilon}\|\tilde{u}_s(t)\|^4_X ~\txtd t = \cO(\varepsilon^{4H+2})
\qquad
\text{or} 
\qquad
\int_0^{T_0}\|\tilde{u}_s(\varepsilon^{-2}t )\|^4_X ~\txtd t = \cO(\varepsilon^{4H+4}).
\]
Again if $H>1/2$ the lowest order term is given by $\|u_c\|^4_X$
which results in 
\[
\int_0^{T_\varepsilon}\|\tilde{u}_s(t)\|^4_X ~\txtd t = \cO(\varepsilon^{4}) \text{ and therefore } \int_0^{T_0}\|\tilde{u}_s(\varepsilon^{-2}t )\|^4_X ~\txtd t = \cO(\varepsilon^{6}).
\]

\begin{remark}
  Here we notice that for a fixed time we get a better estimate for $u_s$ which does not depend on $\kappa$, which only comes from taking the supremum. We need then $Z_s(t)=\cO(1)$ for a fixed time $t$ which can be proven as in~\cite[Appendix B]{BlNe:22}.
\end{remark}

We now sketch the proof for the bound of the error term in $\cN$. First note that on $P_s\cH$ we obtain
\[
\|{u}_s\| 
\leq \|\tilde{u}_s\| + \sigma\|Z_s\| 
=\cO(\varepsilon^{2H+1}) + \cO(\varepsilon^{2H+1-\kappa})\text{ on }[0,T_\varepsilon],
\]
since $u_s=\cO(\varepsilon^{2H+1})$.
We now
 show that $\varepsilon^{-1}P_cu(\varepsilon^{-2} \cdot)-b= \cO(\varepsilon^{2H})$  on $[0,T_0]$ for our fixed $T_0$, where $u_c:=P_cu$ satisfies the SDE
\[
\txtd u_c= (\nu u_c +\cF(u_c+u_s) )\txtd t + \sigma \txtd W^H_c. 
\]
We define the error as
\[
e:=b-\varepsilon^{-1} u_c(\varepsilon^{-2}\cdot) 
\]
and obtain regarding that $W^H(t\varepsilon^{-2})=W^{H}(t)\varepsilon^{-2H}$ in law
\[
\partial_t e = \frac{\nu}{\varepsilon^2} e + P_c\cF(b) 
- P_c\cF(\varepsilon^{-1} u(\varepsilon^{-2}\cdot) ).
\]
Taking the inner product with $e$ we further get
\begin{equation}
\label{e:est-e}
\frac12 \partial_t \|e \|^2  
= \frac12 \frac{\nu}{\varepsilon^2} \|e\|^2
+ \langle \cF_c(b)- \cF_c(\varepsilon^{-1} u(\varepsilon^{-2}\cdot)) e\rangle.
\end{equation}
We use with the short-hand notation $u^{(\varepsilon)}(\cdot):=\varepsilon^{-1} u(\varepsilon^{-2}\cdot)$ and expand the cubic to derive
\begin{eqnarray*}
\lefteqn{\langle \cF_c(b)- \cF_c(u^{(\varepsilon)}), e\rangle} \\
&\leq& \langle \cF_c(b)- \cF_c(u^{(\varepsilon)}_c)  , e\rangle
+  C \|e\|  \cdot ( \|u^{(\varepsilon)}_c\|^2 \|u^{(\varepsilon)}_s\|_X +  \|u^{(\varepsilon)}_c\| \|u^{(\varepsilon)}_s\|^2_X + \|u^{(\varepsilon)}_s\|^3_X) \\
&\leq &  -\delta \|e\|^4_X +  C \|e\|  \cdot ( \|u^{(\varepsilon)}_c\|^2 \|u^{(\varepsilon)}_s\|_X +  \|u^{(\varepsilon)}_c\| \|u^{(\varepsilon)}_s\|^2_X + \|u^{(\varepsilon)}_s\|^3_X) \\
&\leq &  -\frac12\delta \|e\|^4_X +  C \|u^{(\varepsilon)}_c\|^4   + C\|u_s^{(\varepsilon)}\|^4_X.
\end{eqnarray*}
Thus from \eqref{e:est-e} we get
\[
\frac12 \partial_T \|e \|^2  
\leq \frac12 \frac{\nu}{\varepsilon^2} \|e\|^2
-\frac12\delta \|e\|^4_X +  C \|u^{(\varepsilon)}_c\|^4   + C\|u_s^{(\varepsilon)}\|^4_X.
\]
Using a Gronwall type estimate 
we obtain for $H<1/2$ and for all $T\in[0,T_0]$ with constants depending on $T_0$ 
\[
\|e(T) \|^2  \leq C \|e(0)\|^2 + C \int_0^{T} ( \|u^{(\varepsilon)}_c\|^4   + C\|u_s^{(\varepsilon)}\|^4_X)~\txtd T =\cO(\varepsilon^{4H}),
\]
using the equivalence of the norms on $\cN$, the $L^4(0,T_0,X)$-bound for $u_s$ of order $\cO(\varepsilon^{4H+4})$ and the fact that $e(0)=\cO(\varepsilon^{2H})$. This means that $\varepsilon e(\cdot)$ is of order $\cO(\varepsilon^{2H+1})$ on $[0,T_0]$ as claimed in~\eqref{error}. The case $H>1/2$ leads to an error term of order $\cO(\varepsilon)$ using that $\int_0^{T_0} \|\tilde{u}_s(\varepsilon^{-2}t)\|^4_X~\txtd t =\cO(\varepsilon^6)$. 
\end{proof}

\begin{remark}
   One can easily show that $b=\cO(1)$ using a standard comparison argument for ODEs, see~\cite[Lemma 4.10]{BlNe:22}. 
\end{remark}

\section{Lyapunov exponents for SPDEs with fractional noise}\label{six}

\subsection{Case $\nu<0$ -- Stability} 

This is the trivial case where we always have stability meaning that the FTLEs are  all negative. 

\begin{theorem}\label{l:neg}
Let Assumptions~\ref{a},\ref{f},\ref{s} hold true and let $\nu<0$. Furthermore let $u$ be a solution of~\eqref{spde} in the sense of Theorem \ref{thm:exSPDE} with deterministic initial condition $u_0\in\cH$.
Then for all $T>0$ we have with probability one
\[
\mathbb{P}(\lambda_T(u_0)\leq \nu) =1.
\]
\end{theorem}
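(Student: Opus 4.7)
The plan is to carry out a pathwise energy estimate on the linearization equation \eqref{linearization} and show that the norm of any solution $v$ decays at rate at least $e^{\nu T}$. By Remark~\ref{propLyap} the operator norm of $U_{u_0}(T)$ is the supremum of $\|v(T)\|$ over all solutions of \eqref{linearization} with $\|v(0)\|=1$, so a uniform decay bound for $\|v(T)\|$ immediately yields a uniform upper bound on $\lambda_T(u_0)$.

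First I would fix a realization $\omega$ (working pathwise, since the noise does not enter the linearization explicitly) and a solution $u$ of \eqref{spde} in the sense of Theorem~\ref{thm:exSPDE}. For any $v_0\in\cH$ the associated solution $v$ of~\eqref{linearization} has the regularity $v\in L^2(0,T,\cH^{1/2})\cap C^0([0,T],\cH)$, and since $u\in L^4(0,T,X)$ and $D\cF(u)v$ is quadratic in $u$ and linear in $v$, one can argue as in Section~\ref{sec:ex} that $\partial_t v$ lies in a space dual to $\cH^{1/2}\cap X$ and that $t\mapsto \|v(t)\|^2$ is absolutely continuous with
\[
\tfrac{1}{2}\partial_t \|v\|^2 = \langle Av,v\rangle + \nu\|v\|^2 + \langle D\cF(u)v,v\rangle.
\]

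Next I would dispose of the two non-linear/non-positive terms. By Assumption~\ref{a} the operator $A$ is symmetric and non-positive, so $\langle Av,v\rangle\leq 0$. By the sign condition~\eqref{e:sign} derived from the stable cubic structure of $\cF$, we also have $\langle D\cF(u)v,v\rangle \leq 0$ for every $u$ with $u\in X$ (which holds for a.e.\ $t$ in our integrability class). Therefore
\[
\tfrac12\partial_t \|v\|^2 \leq \nu \|v\|^2,
\]
and Gronwall yields $\|v(T)\|\leq e^{\nu T}\|v(0)\|$ pathwise, for every $v_0$.

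Taking the supremum over $\|v_0\|=1$ using Remark~\ref{propLyap} gives $\|U_{u_0}(T)\|_{\cL(\cH)}\leq e^{\nu T}$, hence $\lambda_T(u_0)\leq \nu$ almost surely. The principal technical obstacle is justifying the chain rule for $t\mapsto \|v(t)\|^2$ with only $L^2(0,T,\cH^{1/2})$ regularity and a rough cubic forcing, but this is exactly the same kind of differentiation-of-the-norm argument already used for the uniqueness argument in Section~\ref{sec:ex} (the Aubin–Lions/parabolic-regularity scheme), so it can be imported without change. Everything else in the proof is immediate from Assumptions~\ref{a} and~\ref{f} and requires no control of the fractional noise.
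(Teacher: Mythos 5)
Your argument is correct and is essentially identical to the paper's proof: a pathwise energy estimate for the linearization, discarding $\langle Av,v\rangle\le 0$ and $\langle D\cF(u)v,v\rangle\le 0$ via Assumption~\ref{a} and \eqref{e:sign}, then Gronwall and Remark~\ref{propLyap}. Your extra care about justifying the differentiability of $t\mapsto\|v(t)\|^2$ is a reasonable elaboration of a point the paper handles by citing the regularity of $v$ from Section~\ref{sec:ex}.
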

\begin{proof}
The proof is similar to~\cite[Proposition 3.1 a)]{BlEnNe:21}. We consider a solution $v$ of the linearized problem~\eqref{linearization} around a solution $u$ of~\eqref{spde} with deterministic $\cH$-valued initial condition $u_0$. 
Recalling that $v\in H^1(0,T,\cH^{-1/2})\cap L^2(0,T,\cH^{1/2})\cap C(0,T,\cH)$ we obtain using \eqref{e:sign} the standard energy estimate
\begin{align*}
    \frac{1}{2}\partial_t \|v\|^2& =\langle A v, v\rangle +\nu \|v\|^2 +\langle D\cF(u)v, v\rangle 
    \leq \nu \|v\|^2.
\end{align*}
This implies that $ \|v(t)\| \leq \| v(0)\|e^{t\nu}$ for all $t>0$. Due to Remark \ref{propLyap}
we have for any time $T>0$
\[
\lambda_T(u_0) =\frac{1}{T}\ln (\|U_{u_0}(T)\|_{L(\cH)} )\leq  \nu 
\]
which finishes the proof.
\qed
\end{proof}
\begin{remark}
    The statement remains valid if we consider random initial data $u_0(\omega)$, in particular the random fixed point of~\eqref{spde} whose existence was established in~\cite{MaSc:04}. 
\end{remark}

\subsection{Case $1\gg \sigma \approx \nu^{1/2+H}$ --  Instability }\label{case1}
In this setting we first recall that $\sigma/\nu^{1/2+H}$ 
and  $\nu^{1/2+H}/\sigma$ are both $\cO(1)$. 
Setting $\varepsilon^2=\nu$ we obtain the amplitude equation
    \begin{align}\label{ae:case1} \txtd b = [b+\cF_c(b)]~\txtd T +\frac{\sigma}{\nu^{\frac12+H}} \txtd \beta_{\nu^H}(T). 
    \end{align}
\begin{theorem}\label{thm:main1}
    Let $b_0$ be an initial data of~\eqref{a1} for which the corresponding solution satisfies Lemma~\ref{l1}. Furthermore let $\lambda_T$ be the finite-time Lyapunov exponent of the SPDE~\eqref{spde} with initial data $u_0=\varepsilon b_0$. 
For all terminal times $T_0>0$ and all probabilities $p\in(0,1)$ 
there is a set $\Omega_p$ with probability larger than $p$ and a constant $C_p>0$
such that for $\omega\in \tilde{\Omega}\cap\Omega_p$ we have for all $T\in[0,T_0]$ that
\begin{align} 
    \lambda_{T\nu^{-1}}(\nu^{1/2} b_0) > \begin{cases} & \frac{\nu}{4}-C_p \frac{\nu^{1+H}}{T},~ \text{ if } H<1/2 \\
& \frac{\nu}{4}-C_p \frac{\nu^{3/2}}{T}, ~~\text{ if } H>1/2.
    \end{cases}
\end{align}
\end{theorem}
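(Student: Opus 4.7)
The plan is to use the amplitude-equation approximation of Theorem~\ref{thm:aprox} to transfer the positive finite-time Lyapunov exponent of Lemma~\ref{l1} to the SPDE linearization. Setting $\varepsilon^2=\nu$, the amplitude equation~\eqref{ae:case1} has the form~\eqref{a1} of Section~\ref{four}. I would choose the deterministic initial data $u_0=\varepsilon b_0\in\cN$ with $b_0$ as in Lemma~\ref{l1}, so that $P_su_0=0$ and $b(0)=\varepsilon^{-1}u_0=b_0$; Theorem~\ref{thm:aprox} then gives $u-\varepsilon b(\varepsilon^2\cdot)=\cO(\varepsilon^{2H+1-\kappa})$ for $H<1/2$ and $\cO(\varepsilon^{2-\kappa})$ for $H>1/2$ uniformly on $[0,T_\varepsilon]$. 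The set $\tilde\Omega$ of Lemma~\ref{l1} has positive probability and satisfies $T+\int_0^T D\cF_c(b(s))\,\txtd s\geq T/4$ for all $T\leq T_0$. Quantifying the $\cO$-constants in Theorem~\ref{thm:aprox} yields a set $\Omega_p$ of probability at least $p$ on which those constants are bounded by $C_p$. Since $b_0$ is deterministic, $\mathbb{P}(\tilde\Omega\cap\Omega_p)>0$ for $p$ sufficiently close to $1$.

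On $\tilde\Omega\cap\Omega_p$ I would bound $\|U_{u_0}(T_\varepsilon)\|_{\cL(\cH)}$ from below by $\|v(T_\varepsilon)\|$ for $v_0\in\cN$ with $\|v_0\|=1$. Writing $v=v_c+v_s$ with $v_s(0)=0$, Duhamel's formula on the exponentially contractive $P_s$-semigroup combined with $\|D\cF(u)\|=\cO(\varepsilon^2)$ gives $\|v_s(t)\|\leq C\varepsilon^2\sup_{s\leq t}\|v_c(s)\|$, so the stable coupling contributes only a bounded multiplicative factor $1+\cO(\nu)$ to $\|v(T_\varepsilon)\|$. Since $\cN$ is one-dimensional, $v_c$ solves a scalar linear ODE whose logarithm integrates to
$$
\ln\|v_c(T_\varepsilon)\|\;=\;\int_0^{T_\varepsilon}\bigl[\nu+a(t)\bigr]\,\txtd t\;+\;\cO(\nu),\qquad a(t):=\frac{\langle P_cD\cF(u(t))v_c(t),\,v_c(t)\rangle}{\|v_c(t)\|^2}.
$$
Using trilinearity of $\cF$ and the splitting $u=\varepsilon b(\varepsilon^2\cdot)+R_c+u_s$, one expands $a(t)=\varepsilon^2 D\cF_c(b(\varepsilon^2 t))+E(t)$. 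Changing variables $s=\varepsilon^2 t$ converts the main integral into $T+\int_0^T D\cF_c(b(s))\,\txtd s\geq T/4$ on $\tilde\Omega$, so that
$$
\lambda_{T_\varepsilon}(u_0)\;\geq\;\frac{\nu}{4}\;-\;\frac{\nu}{T}\Bigl|\int_0^{T_\varepsilon}E(t)\,\txtd t\Bigr|\;-\;\cO(\nu^2/T).
$$

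The main obstacle is the sharp $H$-dependent bound on the integrated error $\int_0^{T_\varepsilon}E(t)\,\txtd t$. Expanding the cubic produces terms of the forms $\varepsilon|R_c|$, $|R_c|^2$, $\varepsilon\|u_s\|_X$, $\|u_c\|_X\|u_s\|_X$ and $\|u_s\|_X^2$, evaluated as scalar coefficients on $\cN$. The $R_c$-terms are handled by the pointwise $\cO(\varepsilon^{2H+1-\kappa})$ bound of Theorem~\ref{thm:aprox}. The $u_s$-terms are only controlled via the integrated $L^4(0,T_\varepsilon,X)$-estimate $\cO(\varepsilon^{4H+2})$ together with the pointwise $\cH$-bound $\cO(\varepsilon^{2H+1-\kappa})$, both coming from the proof of Theorem~\ref{thm:aprox}. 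The delicate step, and the technical heart of the argument, is to H\"older-interpolate these bounds so that the integrated error is absorbed into a $T$-uniform constant of order $C_p\nu^H$ for $H<1/2$, respectively $C_p\nu^{1/2}$ for $H>1/2$. Dividing by $T_\varepsilon=T/\nu$ then produces the advertised rates $C_p\nu^{1+H}/T$ and $C_p\nu^{3/2}/T$; the crossover at $H=1/2$ reflects exactly the balance in Theorem~\ref{thm:aprox} between the stochastic-convolution contribution of order $\varepsilon^{2H+1}$ and the nonlinear-coupling contribution of order $\varepsilon^2$.
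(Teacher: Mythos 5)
Your overall strategy --- rescale with $\varepsilon^2=\nu$, start the SPDE in $u_0=\varepsilon b_0\in\cN$ so that Theorem~\ref{thm:aprox} applies with $P_su_0=0$ and $b(0)=b_0$, and transfer the lower bound $T+\int_0^T D\cF_c(b(s))\,\txtd s\geq T/4$ from Lemma~\ref{l1} to the SPDE --- coincides with the paper's. The execution of the decisive step differs, however, and this is where the gap lies. The paper does \emph{not} track the logarithmic derivative of $v_c$ on the fast time scale; it compares the two linearized flows \emph{additively in norm} on the slow scale, proving $\|V(T)-\varphi(T)\|\leq C_p[\varepsilon+\varepsilon^{2H}]$ for $T\in[0,T_0]$ (Theorem~\ref{error:linearization1}), and only afterwards takes a logarithm via $\ln(e^{cT}-x)\geq cT-Cx$. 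In that additive comparison the dangerous coupling terms enter the Gronwall inequality for $\partial_T\|V_c-\varphi\|^2$ through Young's inequality as \emph{fourth} powers ($\|U_s\|_X^4$, $\|b-U_c\|_{\cN}^4$) or as $\|V_s\|^2_{\cH^{1/2}}$, for all of which integrated bounds over the $\cO(1)$-length slow interval are directly available.

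Your route instead puts the error into the exponent, $\ln\|v_c(T_\varepsilon)\|=\int_0^{T_\varepsilon}[\nu+a(t)]\,\txtd t+\dots$, so you must bound $\int_0^{T_\varepsilon}E(t)\,\txtd t$, where $E$ contains terms \emph{linear} in $\|u_s\|_X$ (e.g.\ the cross term $\langle\cF(u_c,u_s,v_c),v_c\rangle/\|v_c\|^2\sim\varepsilon\|u_s\|_X$). You defer this to an unspecified ``H\"older interpolation'', but the only control on $\|u_s\|_X$ is the $L^4(0,T_\varepsilon;X)$ bound of order $\cO(\varepsilon^{4H+2})$, and H\"older over the long interval gives $\int_0^{T_\varepsilon}\varepsilon\|u_s\|_X\,\txtd t\leq\varepsilon\,T_\varepsilon^{3/4}\big(\int_0^{T_\varepsilon}\|u_s\|_X^4\,\txtd t\big)^{1/4}=\cO(T^{3/4}\varepsilon^{H})$, hence a contribution of order $\nu^{1+H/2}T^{-1/4}$ to the FTLE --- strictly worse than the advertised $C_p\nu^{1+H}/T$ for small $\nu$. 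So the rates you claim do not follow from the interpolation you describe: the step you yourself label the ``technical heart'' is both missing and, as proposed, insufficient. Two smaller issues: the pointwise operator bound $\|D\cF(u)\|=\cO(\varepsilon^2)$ underlying your Duhamel estimate for $v_s$ is not available (only time-integrated $X$-norm bounds on $u$ are at hand, and $D\cF(u)$ maps $X$ to $X^*$, so one must invoke parabolic smoothing as in the paper's energy estimate, which yields only $\|V_s\|_{\cH}=\cO(\varepsilon)$ relative to normalized data, not $\cO(\varepsilon^2)$); and dividing by $\|v_c\|^2$ presupposes $v_c(t)\neq0$ on $[0,T_\varepsilon]$, which requires justification.
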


 The main ideas are  the approximation of the SPDE~\eqref{spde} with the amplitude equation~\eqref{a1} for $\varepsilon^2=\nu$, $\sigma=\cO(\varepsilon^{2H+1})$, Lemma~\ref{l1} and the control of the approximation error. 
We start the SPDE in $\varepsilon b_0$ and
 have that $\varepsilon b_0=\cO(\varepsilon)$ since $b=\cO(1)$. In this situation Theorem~\ref{thm:aprox} is applicable. 

Now we control the approximation error between the linearized SPDE and the linearized ODE.  To this aim we firstly introduce the slow scaling
$T=t\varepsilon^2$ and define $U$ via 
\[
u(t)= \varepsilon U(t\varepsilon^2).
\]
Let $v$ be the solution of the linearization of the SPDE around a solution $u$
\[ \partial_t v =Av+\nu v+ D\cF(u)v.
\]
On the slow scale $v(t)=\varepsilon V(t\varepsilon^2)$ 
we have (using that $D\mathcal{F}$ is quadratic)
\[
\partial_T V = \varepsilon^{-2} AV + V + D\cF(U)V. \]

Let $\varphi$ be the solution of the linearization of the amplitude equation around a solution $b$ which satisfies the support theorem
\[
\partial_T \varphi = \varphi + D\cF_c(b) \varphi.
\]
We only consider initial conditions $V(0)=\varphi(0)\in\cN$ of order $1$ independent of $\varepsilon$.

The first crucial step is the following approximation result.
\begin{theorem}\label{error:linearization1}
Let $b_0$ be an initial condition for which the corresponding solution satisfies Lemma~\ref{l1}. For any probability $p\in(0,1)$ there is a set 
 $\Omega_p$ with probability larger than $p$ such that 
the error between the linearization of the SPDE~\eqref{spde1} with initial data $u_0=\varepsilon b_0$ and of the amplitude equation~\eqref{ae:case1} is bounded by $C[\varepsilon+\varepsilon^{2H}]$.
\end{theorem}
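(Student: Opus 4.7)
The plan is to work on the slow time scale and control the error $R(T) := V(T) - \varphi(T)$, where $V(T) = \varepsilon^{-1} v(T\varepsilon^{-2})$ is the rescaling of the linearized SPDE and $\varphi$ solves the linearization of the amplitude equation around the profile $b$ produced by Lemma~\ref{l1}. With the deterministic initial condition $V(0) = \varphi(0) \in \cN$ we have $R(0)=0$ and
\begin{equation*}
    \partial_T R = \varepsilon^{-2} A R + R + D\cF(U)V - D\cF_c(b)\varphi,
\end{equation*}
where $U = \varepsilon^{-1} u(\varepsilon^{-2}\cdot)$. The set $\Omega_p$ on which the conclusion is asserted should be the intersection of the high-probability events for the bounds in Assumption~\ref{o:z}, for $U-b$ provided by Theorem~\ref{thm:aprox}, and for the support-theorem event $\tilde\Omega$ from Lemma~\ref{l1}. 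This guarantees simultaneously that $b, U = \cO(1)$, that $U-b = \cO(\varepsilon^{2H-\kappa} + \varepsilon^{1-\kappa})$, and that the noise bounds hold.

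First, I would decompose $R = R_c + R_s$ according to $\cH = \cN \oplus \cS$. Because $\varphi\in\cN$, the stable part coincides with $V_s$ itself, and it satisfies $\partial_T R_s = \varepsilon^{-2} A R_s + R_s + P_s D\cF(U)V$ with $R_s(0)=0$. The smoothing estimate $\|e^{tA/\varepsilon^2} P_s\|_{\cL(\cH^{-\alpha},\cH)} \lesssim \varepsilon^{2\alpha} t^{-\alpha} e^{-\mu t/\varepsilon^2}$ combined with $\|D\cF(U)V\|_{X^*} \lesssim \|U\|_X^2 \|V\|_X$ in a Duhamel formula gives $\|R_s\|_\cH = \cO(\varepsilon^{2-\kappa})$, which is subdominant compared to $\varepsilon + \varepsilon^{2H}$.

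Second, for the critical error $R_c = V_c - \varphi \in \cN$, I would expand
\begin{equation*}
    P_c D\cF(U) V - D\cF_c(b)\varphi
    = P_c [D\cF(U) - D\cF(b)] V + D\cF_c(b) R_c + P_c D\cF(b) V_s,
\end{equation*}
using that $D\cF_c(b)\psi = P_c D\cF(b)\psi$ for $\psi \in \cN$. Trilinearity of $\cF$ yields $\|D\cF(U) - D\cF(b)\|_{\cL(X,X^*)} \lesssim (\|U\|_X + \|b\|_X)\|U-b\|_X$, so together with the approximation bound for $U-b$ and the estimate for $V_s$ one obtains an inhomogeneity of order $\varepsilon^{2H-\kappa} + \varepsilon^{1-\kappa}$ in the equation $\partial_T R_c = R_c + D\cF_c(b) R_c + \text{remainder}$. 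Since $\cN$ is finite-dimensional and $b$ is bounded on $\tilde\Omega$ by Lemma~\ref{l1}, a scalar Gronwall estimate on $[0,T_0]$ gives $\|R_c\|_\cH = \cO(\varepsilon^{2H} + \varepsilon)$ after absorbing $\kappa$ in the constants. Adding the two pieces yields the stated bound.

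The main obstacle is the careful bookkeeping of norms: although $\cN$ is finite-dimensional, the nonlinear defect $D\cF(U)-D\cF(b)$ is naturally estimated in $\cL(X,X^*)$, so one needs the $L^4(0,T_0,X)$ bounds of Theorem~\ref{thm:aprox} on $U_s$ and the analytic smoothing of the semigroup to come back to $\cH$. A secondary difficulty is closing the argument as a bootstrap: one must control $\|V\|$ uniformly in $\varepsilon$ before using it in the Duhamel estimate for $R_s$, which is achieved because $b$ stays in a ball of radius $\delta$ on $\tilde\Omega$, so that the coefficient $D\cF_c(b)$ in the Gronwall estimate for $V_c$ is bounded independently of $\varepsilon$ on $[0,T_0]$.
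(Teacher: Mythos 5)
Your proposal follows essentially the same route as the paper's proof: pass to the slow time scale, split the error along $\cH=\cN\oplus\cS$, bound the stable part $V_s$ first, and then run a Gronwall argument for $V_c-\varphi$ after decomposing the nonlinear defect exactly as in \eqref{e:Vc-phi}, feeding in the bounds on $U-b$ and on $\|U_s\|_{L^4(0,T_0;X)}$ from Theorem~\ref{thm:aprox}; the only methodological difference (Duhamel with semigroup smoothing instead of energy estimates for $V_s$) is cosmetic. Two small caveats: (i) the claimed order $\cO(\varepsilon^{2-\kappa})$ for the stable part is too optimistic, since $\|U\|_X$ is controlled only in $L^4$ in time, so both the Duhamel and the energy estimate yield $\|V_s\|_{\cH}=\cO(\varepsilon)$ — not subdominant, but harmless, as it contributes exactly the $\varepsilon$-term of the stated bound; (ii) to close the Gronwall step for $R_c$ you also need a time-integrated $X$- (equivalently $\cH^{1/2}$-) bound on $V_s$, not just the $\cH$-bound, in order to handle the terms of the form $P_cD\cF(\cdot)V_s$ — the paper derives $\|V_s\|_{L^2(0,T_0;\cH^{1/2})}=\cO(\varepsilon^{2})$ for precisely this purpose, and your smoothing argument can be made to deliver the same.
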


\begin{proof} 
We show in several steps that the following error bound holds on the set of large probability $\Omega_p$ 
\begin{align}\label{error:linearization}
\|V(T) - \varphi(T) \|_{\cH} &\leq \|P_s V(T) +P_c V(T) -\varphi(T)\|
 = \cO( \varepsilon + \varepsilon^{2H}),~~T\in[0,T_0].
\end{align}

To this aim we first prove 

\begin{equation}
\label{e:Vclaim}
  \| P_sV(T)\|_{\cH} =\cO(\varepsilon) 
\quad\text{and}\quad
\| P_sV\|_{L^2(0,T_0,\cH^{1/2})} =\cO(\varepsilon^{2}).   
\end{equation} 
We first consider $V$ and use standard energy-type estimates
to obtain 
\[
 \frac12 \partial_T\|V\|^2 =  \varepsilon^{-2} \langle AV, V \rangle  + \|V\|^2 + \langle D\cF(U)V , V \rangle \\
 \leq  \|V\|^2, 
\]
where we used the non-negativity of $A$ and \eqref{e:sign}. As  $V(0)=\cO(1)$ this yields a uniform $\cO(1)$-bound on $V$ and thus $P_cV$ in $\cH$ on $[0,T_0]$ (with  constants depending on $T_0$).

We have (using the short-hand  notation $V_s:=P_sV$ and $V_c:=P_cV$) 
\begin{align}
 \frac12 \partial_T \|V_s\|^2 = & \varepsilon^{-2} \langle AV_s, V_s \rangle  + \|V_s\|^2 + \langle P_sD\cF(U)V , V_s \rangle \nonumber\\
 \leq &  - c\varepsilon^{-2}\|V_s\|^2_{\cH^{1/2}}+\|V_s\|^2 + \langle P_sD\cF(U)V_c , V_s \rangle, \label{e:Vappr}
\end{align}
where we used the spectral properties of $A$ (Assumption~\ref{a}) and the sign condition on $D\cF$ from \eqref{e:sign}. 

We now bound the nonlinear term as follows 
\[
\langle P_sD\cF(U)V_c , V_s \rangle  \leq C \|U\|_X^2 \|V_c\|_X \|V_s\|_{\cH^{\alpha}}
\leq  C \varepsilon^2 \|U\|_X^4 \|V_c\|_X^2 +\frac12 c \varepsilon^{-2}\|V_s\|^2_{\cH^{\alpha}},
\]
where we used $\varepsilon$-Young's inequality in the last step. 
Further, as shown above $V$ is $\cO(1)$ in $\cH$. 
Therefore we obtain that  $V_c$ is bounded in $X$ since all norms are equivalent on $\cN$. 
Consequently, we only need a bound on $\int_0^T \|U(S)\|_X^4~\txtd S$, which can be derived from the first step of the approximation result, Theorem~\ref{thm:aprox}. 
Namely, using that
$$ 
\int_0^{T_\varepsilon} \|u(t)\|^4_X~\txtd t =\cO(\varepsilon^{2})
$$ 
we obtain
\begin{equation}
\label{e:intBouU}
  \int_0^{T_0} \|U(S)\|^4_X~\txtd S= 
\varepsilon^2\int_0^{T_\varepsilon} \|U(t\varepsilon^{2})\|^4_X~\txtd t 
=\varepsilon^{-2} \int_0^{T_\varepsilon} \|u(t)\|^4_X~\txtd t
=\cO(1).  
\end{equation}

Thus we can conclude from \eqref{e:Vappr} for two different universal constants $c>0$ and $C>0$ using that $\|\cdot\|_{\cH^\alpha}\leq \|\cdot\|_{\cH^{1/2}}$
\begin{align}\label{second}
    \frac{1}{2}\partial_T \|V_s\|^2 &\leq -c \varepsilon^{-2} \|V_s\|^{2}_{\cH^{1/2}} +\|V_s\|^2_{\cH^\alpha} + C\varepsilon^2\|U\|^4_X \|V_c\|^2_X + \frac12\varepsilon^{-2}c\|V_s\|^2_{\cH^\alpha}\nonumber\\
    & \leq -\frac12 c\varepsilon^{-2}\|V_s\|^2_{\cH^{1/2}} + C \varepsilon^2 \|U\|^4_X \|V_c\|^2_X. 
\end{align}
Consequently, recalling that $V_s(0)=0$ via a Gronwall type estimate we obtain for all $T\in[0,T_0]$ the inequality (with constants depending on $T_0$)
\[ 
\|V_s(T)\|^2 
\leq  C  \varepsilon^2\int_0^{T_0} \|U(S)\|^4_X~\txtd S \sup_{[0,T_0]}\|V_c\|^2_X
= \cO(\varepsilon^{2}),
\]
which means that $\|V_s\|_{\cH}=\cO(\varepsilon)$, as claimed.

For the second statement in \eqref{e:Vclaim} we get from~\eqref{second}  that 
\[ c\varepsilon^{-2} \|V_s\|^2_{\cH^{1/2}} 
\leq -\frac{1}{2}\partial_T \|V_s\|^2 + C\varepsilon^2\|U\|^4_X \|V_c\|^2_X,  
\]
therefore by integration (recall $V_s(0)=0$) we derive
\[ \int_0^{T_0}\|V_s(S)\|^2_{\cH^{1/2}}~\txtd S 
\leq  -\frac{c \varepsilon^2}{2} \|V_s(T)\|^2 + C \varepsilon^4 \int_0^{T_0} \|U(S)\|^4_X~\txtd S \sup_{[0,T_0]}\|V_c\|^2_X. 
\]
As $\|V_c\|_X=\cO(1)$, $\|V_s\|_\cH=\cO(\varepsilon)$ and $\int_0^{T_0} \|U(S)\|^4_X~\txtd S=\cO(1)$ 
we obtain 
\[
\|V_s\|_{L^2(0,T_0,\cH^{1/2})}=\cO(\varepsilon^{2}).
\]
We now focus on the bound for $\|V_c-\varphi\|$. The aim is to show that the error term is of order $\cO(\varepsilon^{2H})$. We observe that $V_c-\varphi$ satisfies the equation
\[
\partial_T (V_c -\varphi) = V_c-\varphi + (D \cF_c (U) V - D\cF_c(b)\varphi),
\]
so we have to estimate
\begin{equation}
    \label{e:Vc-phi}
\frac{1}{2}\partial_T \|V_c-\varphi\|^2 =\| V_c-\varphi\|^2 + \langle D \cF_c (U) V - D\cF_c(b)\varphi, V_c-\varphi \rangle.
\end{equation}
Here the crucial term contains the nonlinearity 
\begin{eqnarray*}
  \langle D\cF_c(b)\varphi - P_c D\cF(U)V , \varphi-V_c\rangle_{\cN}
  &= &  - \langle  P_c D\cF(U)V_s ,  \varphi-P_cV\rangle_{\cN}\\
  && + \langle D\cF_c(b)\varphi - P_c D\cF(b)V_c , \varphi-P_cV\rangle_{\cN}\\
  &&+\langle  P_c [D\cF(b) - D\cF(U) ]V_c , \varphi-P_cV\rangle_{\cN},
\end{eqnarray*}
where the bound on $P_sV$ is needed in the space $X$, but the integral bounds turn out to be sufficient.
We also rely on our  $\mathcal{O}(1)$-bounds on $\varphi$ and $V_c$.

We begin with the first term above which entails
\begin{align*}
\langle  P_c D\cF(U)V_s , \varphi-V_c\rangle_{\cN} 
&\leq  C \| U\|^2_X \|V_s\|_X \|\varphi-V_c\|_{\cN}\\
 &\leq C \|U\|^2_X \|V_s\|_{\cH^{1/2}}
 \|\varphi-V_c\|_{\cN} 
\\&
\leq C  \|V_s\|^2_{\cH^{1/2}} + \|U\|^4_X
 \|\varphi-V_c\|^2_{\cN} . 
 \end{align*}
 In the last step we used again Young's inequality.

The second term gives
\[  \langle D\cF_c(b)(\varphi-V_c),\varphi-V_c \rangle_{\cN} \leq C\|b\|_{\mathcal{N}}^2 \|\varphi-V_c\|^2_{\cN}.
\]

For the last one
we use that $P_c D\cF$ and $D\cF_c$ are the same on $\cN$, which can be seen by explicitly using the properties of the cubic $\cF$.
\begin{align*}
\langle P_c [D\cF(b)-D\cF(U)] V_c,\varphi-V_c  \rangle_{\cN}
& \leq C \|b-U\|^2_X \|V_c\|_X \|\varphi-V_c\|_{\cN}
\\& \leq C \|b-U_c\|^2_{\cN} \|V_c\|_{\cN} \|\varphi-V_c\|_{\cN} 
+ C \|U_s\|^2_X \|V_c\|_{\cN} \|\varphi-V_c\|_{\cN}\\
&\leq C \|b-U_c\|^4_{\cN} \|V_c\|^2_{\cN} 
+ C \|U_s\|^4_X \|V_c\|^2_{\cN} + C\|\varphi-V_c\|^2_{\cN}.
\end{align*}
Regarding~\eqref{e:Vc-phi} and putting all the estimates together we infer that (with universal constants all denoted by $C>0$)
\begin{align*}
\frac{1}{2}\partial_T \|V_c-\varphi\|^2 &\leq \|V_c-\varphi\|^2 + C \|V_s\|^2_{\cH^{1/2}} + \|U\|^4_X \|\varphi - V_c\|^2_{\cN} +\|b\|^2_{\cN}\|\varphi-V_c\|^2_{\cN} \\
& \qquad + C \|b-U_c\|^4_{\cN} \|V_c\|^2_{\cN} + C \|U_s\|^4_{X} \|V_c\|^2_{\cN} +C \|\varphi- V_c\|^2_{\cN}\\
& \leq C \|V_c-\varphi\|^2_{\cN} ( 1+\|U\|^4_X +\|b\|^2_{\cN})
+ C \|V_s\|^2_{\cH^{1/2}} + C \|b-U_c\|^4_{\cN}\|V_c\|^2_{\cN} +C\|U_s\|^4_X \|V_c\|^2_{\cN}\\
&\leq  C \cdot  I \cdot \|V_c-\varphi\|^2_{\cN} + C \cdot J,
\end{align*}
where we set 
\[
I:= 1+\|U\|^4_X + \|b\|^2_{\cN}
\quad\text{and}\quad 
J:=\|V_s\|^2_{\cH^{1/2}} 
+  \|b-U_c\|^4_{\cN}\|V_c\|^2_{\cN}
+\|U_s\|^4_X \|V_c\|^2_{\cN}.
\]
Using Gronwall's inequality we get for $T\in[0,T_0]$
\begin{align*}
\|V_c(T)-\varphi(T)\|^2 
\leq  \Big[ \|V_c(0)-\varphi(0)\|^2 
+C\int_0^T J(S)~\txtd S \Big]\exp \Big(C\int_0^T I(S)~\txtd S\Big).
\end{align*}
We now investigate the order of $J$.
First of all, since we start the SPDE in the rescaled initial condition $u_0=\varepsilon b_0$ we obtain due to Theorem~\ref{thm:aprox} for $H<1/2$
\[
\|b(T)-U_c(T)\|_{\cH} 
= \varepsilon^{-1} \|\varepsilon b- u_c(\varepsilon^{-2} \cdot )\|_{\cH} =\cO(\varepsilon^{2H})
\]
respectively for $H>1/2$
\[
\|b(T)-U_c(T)\|_{\cH} 
= \varepsilon^{-1} \|\varepsilon b- u_c(\varepsilon^{-2} \cdot )\|_{\cH} =\cO(\varepsilon).
\]

Again we use the fact that all norms are equivalent on $\cN$.
Further, using Theorem~\ref{thm:aprox} we know that with $u_s(t)=\varepsilon U_s(t\varepsilon^2)$ 
\[ 
\int_0^{T_{0}} \|U_s(T)\|^4_X ~\txtd T
=
 \varepsilon^{-2} \int_0^{T_\varepsilon} \|u_s(t)\|^4_X~\txtd t 
=\cO(\varepsilon^{4H}).
\]
This term will  determine the order of the error $V_c-\varphi$ since $\| b(T)-U_c(T) \|^4=\cO(\varepsilon^{8H}) $, which is small only if $H<1/4$. 

Due to the above results, we have 
 pathwise bounds for 
 $\int_0^T J(S)~\txtd S$ 
 by $C(\varepsilon^2+C\varepsilon^{4H})$ 
 on a set of probability going to $1$ for $C\to\infty$. 

 Moreover, we can enlarge this set to have 
 for all $T\in[0,T_0]$
\[ \exp\Big(C\int_0^T I(S)~\txtd S \Big)
= \exp\Big(C\int_0^T (1+\|U(S)\|^4_X + \|b(S)\|^2_{\cN}~\txtd S\Big)
\leq C .
\]
Together with the previous bound this gives another condition for the set $\Omega_p$.

In summary, this entails the following error bound on $\Omega_p$
\[
\|\varphi(T)-V_c(T)\|^2 \leq C [\varepsilon^2+ \varepsilon^{4H}] ~\text{ for } T\in[0,T_0].
\]
Putting all these deliberations together, proves the statement \eqref{error:linearization} on $\Omega_p$, i.e.\
\[
\|V(T)-\varphi(T)\|_{\cH} 
\leq \|V_s(T)\|_{\cH}+\|V_c(T)-\varphi(T)\|_{\cN} 
\leq C [\varepsilon+\varepsilon^{2H}],~~T\in[0,T_0]. 
\]
Here we have to add another condition to $\Omega_p$, 
as $\|V_s\|_{\cH}\leq C\varepsilon$ uniformly in $T$ with probability going to $1$ if $C\to\infty$.
\qed
\end{proof}
\begin{remark}
    The previous computation shows that $V_s\in L^2(0,T_0;\cH^{1/2})$ has the same order as in the case of a Brownian motion. For Brownian noise, exactly this term determined the order of $J$, since all the other terms were of higher order, see~\cite{BlNe:23}. However, here
the error between $P_c V$ and $\varphi$ is now determined by the $L^4(0,T_0;X)$ bound on $U_s$. The approximation with the AE gives a term of order $\cO(\varepsilon^{4H})$ in the estimate $\|V_c(T)-\varphi(T)\|$ which becomes small only for $H<1/4$.
\end{remark}

Using this result we can proceed with the proof of Theorem~\ref{thm:main1}.
We first recall the definition of the FTLE for a solution of the SPDE starting in $u_0=\varepsilon b_0$
\begin{eqnarray*}
\lambda_{T\nu^{-1}}(\varepsilon b_0) &=& \frac{\nu}T\ln( \sup\{\|v(T/\nu)\| \ : \ \|v(0)\|=1  \})\\
&=& \frac{\nu^{3/2}}
T\ln( \sup\{\|V(T)\| \ : \ \|V(0)\|= \varepsilon^{-1}  \})\\
&=& \frac{\nu}T\ln( \sup\{\|V(T)\| \ : \ \|V(0)\|=1  \}).
\end{eqnarray*}

Using \eqref{error:linearization} for the finite-time Lyapunov exponents of the SPDE we have on $\tilde{\Omega}\cap \Omega_p$ recalling Lemma~\ref{l1}
\begin{eqnarray*}
\|V(T)\|
&\geq  & \|\varphi(T)\| - \|V(T)-\varphi(T)\| 
\geq   \|\varphi(T)\| - C [\varepsilon+\varepsilon^{2H}]\\
&\geq  &   \exp\{(1-3\delta^2)T\}  - C [\varepsilon+\varepsilon^{2H}]>0,
\end{eqnarray*}
which is positive if $\varepsilon_0$ is sufficiently small. Here we can choose $\delta=\frac{1}{2}$ as in Lemma~\ref{l1}.

To proceed we use a simple estimate for the logarithm. It is known that there exists a positive constant $c>0$ such that $ \ln(1-x)\geq -c x $ for $0\leq x\leq\frac{1}{2}.$   Therefore as $\varepsilon+\varepsilon^{2H}\ll e^{Tc} $ we have that
\begin{align*}
\ln(e^{cT}-[\varepsilon+\varepsilon^{2H}]) &=\ln(e^{cT}(1-[\varepsilon+\varepsilon^{2H}]e^{-cT})) 
= cT + \ln(1-[\varepsilon+\varepsilon^{2H}] e^{-cT})\\& \geq cT - C [\varepsilon+\varepsilon^{2H}] e^{-cT}\geq cT - C[\varepsilon+\varepsilon^{2H}] .
\end{align*}
Thus we can conclude that on $\tilde{\Omega}\cap \Omega_p$ we can bound
\begin{eqnarray*}
\lefteqn{\lambda_{T\nu^{-1}}(\varepsilon b) = 
\frac{\nu}T\ln( \sup\{\|V(T)\| \ : \ \|V(0)\|=1  \})}\\
&\geq  &  \frac{\nu}{T}\ln( \sup\{\|\varphi(T)\| - \|V(T)-\varphi(T)\| : \ \|V(0)\|=1  \})\\
&\geq  &  \frac{\nu}T\ln( \sup\{\|\varphi(T)\| - \|V(T)-\varphi(T)\| : \ \|v(0)\|=1, V(0)=\varphi(0)\in\cN  \})\\
&\geq  &  \frac{\nu}T\ln( \sup\{\|\varphi(T)\| - [\varepsilon+\varepsilon^{2H} ]  : \ \|V(0)\|=1, V(0)=\varphi(0)\in\cN  \})\\
&\geq  &   \frac{\nu}T\ln( \exp\{(T+ \int_0^T D\cF_c(b(s,\omega))~\txtd s\}  - C [\varepsilon+\varepsilon^{2H}])\\
&\geq  &   \frac{\nu}T\ln( \exp\{(1-3\delta^2)T\}  - C [\varepsilon+\varepsilon^{2H}]\\
& \geq & \nu (1-3\delta^2)-C\frac{\nu[\varepsilon+ \varepsilon^{2H}]}{T}.
\end{eqnarray*}
Choosing for e.g. $\delta=1/2$ as in Lemma~\ref{a1} proves that for $\omega\in\tilde{\Omega}\cap \Omega_p$ and for all $T\in[0,T_0]$ we have
\begin{align*}
    \lambda_{T\nu^{-1}}(\varepsilon b_0) >\begin{cases}
       & \frac{\nu}{4} -C_p\frac{\nu \varepsilon^{2H}}{T}, ~~~H<1/2\\
        & \frac{\nu}{4} -C_p \frac{\nu \varepsilon}{T},~~~~~~ H>1/2.
    \end{cases}
\end{align*}
\qed

\begin{remark}
We notice that the error term between the two linearizations is determined by $\|V_s\|_{\cH}$ (which is the same as for the Brownian motion) whereas $V_c-\varphi$ is determined by the $L^4(0,T_0;X)$ bound on $U_s$ which is of order $\cO(\varepsilon^{2H})$. If $H<1/2$ this will become small, whereas for $H>1/2$ the contribution of $V_s$ dominates. 
\end{remark}

\subsection{Case $1\gg\nu^{H+\frac{1}{2}}\gg\sigma>0$ -- Instability}\label{case2}

In this case the amplitude equation is given by 
\begin{align}\label{ae:no:noise}
\txtd b=[b+\cF_c(b)]~\txtd T. 
\end{align}
Here we consider the solution $b=0$ of the amplitude equation and let $u$ be the solution of SPDE with $u(0)=u_0=0$.
 Here we simplify the proof by neglecting the small noise $\sigma/\nu^{H+1/2}$ in the approximation. Therefore we cannot use  Theorem \ref{thm:aprox} to approximate the SPDE with~\eqref{ae:no:noise}. However all the bounds provided for $u$ in Theorem~\ref{thm:aprox} do not depend on the amplitude equation and are enough for our aims. 

As before, let $V$ be the solution of the linearized SPDE
\[ 
\partial_T V =\varepsilon^{-2}AV+ V+ D\cF(U)V
\]
and thus
\[ \partial_T V_c = V_c+  P_cD\cF(U)V = V_c+ D\cF_c(U)(V_c+V_s).
\]
The linearization of the amplitude equation around 0 reduces to 
\[ \partial_T \varphi = \varphi + DF_c(0)\varphi ,\]
which gives 
\[ \partial_T \varphi =\varphi. \] 

The main result in this case reads as follows. Recall that $T_0$ is an arbitrary terminal time and $T\in[0,T_0]$.

\begin{theorem}\label{thm:main2}
Let $\lambda_T$ be the finite-time Lyapunov exponent of the SPDE~\eqref{spde} with initial data $u_0=0$.  
For all probabilities $p\in(0,1)$ 
there is a set $\Omega_p$ with probability larger than $p$ and a constant $C_p>0$
such that for $\omega\in \Omega_p$ 
we have that  
\[  \lambda_{T\nu^{-1}}(0) > \nu - C_p\frac{\nu^{3/2}}{T},~~\text{ for all } H\in(0,1) \text{ and } T\in[0,T_0]. \]
\end{theorem}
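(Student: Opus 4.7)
The plan is to mirror the proof of Theorem~\ref{thm:main1}, taking as reference the explicit solution $\varphi(T)=\varphi(0)e^T$ of the linearised noise-free amplitude equation (valid because $D\cF_c(0)=0$). We fix $\varphi(0)\in\cN$ with $\|\varphi(0)\|=1$ and set $V(0)=\varphi(0)$; since $\lambda_{T\nu^{-1}}(0)=(\nu/T)\ln\sup_{\|V(0)\|=1}\|V(T)\|$, the claim reduces to showing, on a set $\Omega_p$ of probability at least $p$, that $\|V(T)\|\geq e^T(1-C\varepsilon)$ uniformly in $T\in[0,T_0]$ and $H\in(0,1)$.

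The bounds on $u$ provided by Theorem~\ref{thm:aprox} remain valid: since $u_0=0$ and hence $P_su_0=0$, we have $u=\cO(\varepsilon)$ and $u_s=\cO(\varepsilon^{2H+1})$ in $\cH$ on $[0,T_\varepsilon]$, together with the integrated bounds $\int_0^{T_\varepsilon}\|u\|_X^4\,\txtd t=\cO(\varepsilon^2)$ and $\int_0^{T_\varepsilon}\|u_s\|_X^4\,\txtd t=\cO(\varepsilon^{4H+2})$. Step~1 of the proof of Theorem~\ref{error:linearization1} then applies verbatim, as it only uses these bounds together with the $\cO(1)$ bound on $V_c$ (from $V(0)=\cO(1)$ and the sign condition), yielding $\|V_s(T)\|_\cH=\cO(\varepsilon)$ pointwise and $\|V_s\|_{L^2(0,T_0,\cH^{1/2})}=\cO(\varepsilon^2)$ on the same set $\Omega_p$.

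For the center error $e:=V_c-\varphi$, which satisfies $\partial_T e=e+P_cD\cF(U)(V_c+V_s)$ with $e(0)=0$, I would pass to the rescaled variable $\tilde V_c:=e^{-T}V_c$ solving $\partial_T\tilde V_c=P_cD\cF(U)\tilde V_c+e^{-T}P_cD\cF(U)V_s$ and derive an energy estimate for $\|\tilde V_c-\varphi(0)\|^2$. The sign condition \eqref{e:sign} applied to $\tilde V_c\in\cN$ renders the leading quadratic term non-positive after the algebraic splitting $\tilde V_c=\varphi(0)+(\tilde V_c-\varphi(0))$, leaving only cross terms of the form $\|U\|_X^2\|\varphi(0)\|\|\tilde V_c-\varphi(0)\|$ and $e^{-S}\|U\|_X^2\|V_s\|_X$; the latter is controlled by Cauchy-Schwarz against the $L^2(0,T_0,\cH^{1/2})$ bound on $V_s$, giving a contribution of order $\cO(\varepsilon^2)$. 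A Gronwall step then produces $\|V_c(T)-\varphi(T)\|=\cO(\varepsilon\,e^T)$ on a possibly smaller $\Omega_p$.

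Combining the two estimates yields $\|V(T)-\varphi(T)\|=\cO(\varepsilon\,e^T)$, hence $\|V(T)\|\geq e^T(1-C\varepsilon)$, and the elementary inequality $\ln(1-x)\geq-2x$ for $x\in[0,\frac12]$ converts this into $\lambda_{T\nu^{-1}}(0)\geq\nu-C_p\nu^{3/2}/T$ uniformly in $H\in(0,1)$ and $T\in[0,T_0]$. The main obstacle is the third step: the crude Cauchy-Schwarz bound $\int_0^{T_0}\|U\|_X^2\,\txtd S\leq T_0^{1/2}(\int_0^{T_0}\|U\|_X^4\,\txtd S)^{1/2}=\cO(1)$ is too weak to yield the required $\cO(\varepsilon)$ factor. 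One must instead exploit the specific quadratic structure $\alpha(U):=-\langle D\cF(U)\varphi(0),\varphi(0)\rangle$ together with the fact that in the regime $\sigma\ll\nu^{H+1/2}$ the noise driving the center component $u_c$ is strictly smaller than $\nu^{H+1/2}$, which through a refined energy estimate on $\tilde u_c$ should provide sharper pathwise bounds on $u_c$ than those of Theorem~\ref{thm:aprox} and thus close the argument uniformly in $H$.
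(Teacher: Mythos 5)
Your overall route is the same as the paper's: approximate the linearisation of the SPDE by the linearisation $\partial_T\varphi=\varphi$ of the noise-free amplitude equation around $b=0$, prove $\|V_s\|_{\cH}=\cO(\varepsilon)$ and $\|V_s\|_{L^2(0,T_0,\cH^{1/2})}=\cO(\varepsilon^2)$ exactly as in Theorem~\ref{error:linearization1}, bound $\|V_c-\varphi\|$ by a Gronwall argument, and convert $\|V(T)\|\geq e^{T}-C\varepsilon$ into the stated lower bound via $\ln(1-x)\geq -cx$. Your exponential rescaling $\tilde V_c=e^{-T}V_c$ is only a cosmetic variant of the paper's direct estimate $\frac12\partial_T\|V_c-\varphi\|^2\leq cI\|V_c-\varphi\|^2_{\cN}+cJ$ with $I=1+\|U\|_X^4$ and $J=\|U\|_X^4\|V_c\|_\cN^2+\|V_s\|^2_{\cH^{1/2}}$, and the first two steps are fine.

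However, as you yourself admit in your final paragraph, the proposal does not close. The centre estimate requires $\int_0^{T_0}\|U\|_X^4\|V_c\|_\cN^2\,\txtd S=\cO(\varepsilon^2)$ so that Gronwall yields $\|V_c-\varphi\|=\cO(\varepsilon)$, whereas Theorem~\ref{thm:aprox} only supplies $\int_0^{T_0}\|U\|_X^4\,\txtd S=\cO(1)$ together with $\|V_c\|_\cN=\cO(1)$; neither Cauchy--Schwarz nor your rescaling changes this. Your proposed repair --- exploiting $u_0=0$ and $\sigma\ll\nu^{H+1/2}$ to obtain a sharper pathwise bound on $u_c$ and hence on $U$ --- is indeed the missing ingredient: the amplitude $b$ started at $0$ with noise intensity $\sigma\nu^{-H-1/2}\ll1$ remains $o(1)$ on $[0,T_0]$, so $U$ is genuinely small and not merely $\cO(1)$. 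But you do not carry this out, so the theorem is not proved. Be warned that a naive refinement of the energy estimate for $\tilde u$ does not suffice either: even with $\tilde u(0)=0$, the term $\nu\|\tilde u\|^2-\delta\|\tilde u\|_X^4\leq C\nu^2$ integrated over $[0,T_\varepsilon]$ still only gives $\int_0^{T_\varepsilon}\|\tilde u\|_X^4\,\txtd t=\cO(\varepsilon^2)$, i.e.\ $\int_0^{T_0}\|U\|_X^4\,\txtd S=\cO(1)$, so one really must pass through the smallness of the amplitude itself. For what it is worth, the paper's own writeup is terse on exactly this point --- it asserts $\|V_c(T)-\varphi(T)\|\leq C\nu^{1/2}$ from the same Gronwall inequality while quoting only $\int_0^{T}\|U(S)\|_X^4\,\txtd S=\cO(1)$ --- so you have correctly isolated the one step that genuinely needs an argument; you just have not supplied it.
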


\begin{proof}
Recall the rescaling $v(T/\nu)=\nu^{1/2}V(T)$.
Analogously to the previous case we have on a set $\Omega_p$ that
\begin{eqnarray*}
\lefteqn{\lambda_{T\nu^{-1}}(0) = \frac{\nu}T\ln( \sup\{\|v(T/\nu)\| \ : \ \|v(0)\|=1  \})}\\
&=& \frac{\nu^{3/2}}
T\ln( \sup\{\|V(T)\| \ : \ \|V(0)\|= \nu^{-1/2}  \})\\
&=& \frac{\nu}T\ln( \sup\{\|V(T)\| \ : \ \|V(0)\|=1  \})\\
&\geq  &  \frac{\nu}{T}\ln( \sup\{\|\varphi(T)\| - \|V(T)-\varphi(T)\| : \ \|V(0)\|=1  \})\\
&\geq  &  \frac{\nu}T\ln( \sup\{\|\varphi(T)\| - \|V(T)-\varphi(T)\| : \ \|v(0)\|=1, V(0)=\varphi(0)\in\cN  \})\\
&\geq  &  \frac{\nu}T\ln( \sup\{\|\varphi(T)\| - \varepsilon : \ \|V(0)\|=1, V(0)=\varphi(0)\in\cN  \})\\
& \geq & \frac{\nu}{T} \ln (\exp T -\varepsilon)  \\
&\geq & \nu  - C \frac{\nu \varepsilon }{T}.
\end{eqnarray*}
\qed
\end{proof}

Therefore we get 
\[ \partial_T (V_c-\varphi) =V_c-\varphi +D\cF_c(U)(V_c+V_s), \]
which further leads to 
\begin{align*}
    \frac{1}{2} \partial_T \|V_c-\varphi\|^2 &=\|V_c-\varphi\|^2 + \langle D\cF_c(U)(V_c+V_s),V_c-\varphi \rangle\\
    & = \|V_c-\varphi\|^2 + \langle D\cF_c(U)V_c, V_c-\varphi  \rangle +\langle D\cF_c(U)V_s,V_c-\varphi \rangle\\
    & \leq \|V_c-\varphi\|^2 
    + c\|U\|^2_X \|V_c\|_{\cN} \|V_c-\varphi\|_{\cN} 
    +c\|U\|^2_X \|V_s\|_X \|V_c-\varphi\|_{\cN}\\
    & \leq \|V_c-\varphi\|^2 
    + c\|U\|^2_X \|V_c\|_{\cN} \|V_c-\varphi\|_{\cN} 
    +c\|U\|^2_X \|V_s\|_{\cH^{1/2}} \|V_c-\varphi\|_{\cN}\\
    & \leq c \|V_c-\varphi\|^2 
    +c\|U\|^4_X \|V_c\|^2_{\cN} + c\|V_s\|^2_{\cH^{1/2}} + c\|U\|^4_X\|V_c-\varphi\|^2_\cN\\
    & \leq c(1 +\|U\|^4_X)\|V_c-\varphi\|^2_\cN +c\|U\|^4_X\|V_c\|^2_\cN + c\|V_s\|^2_{\cH^{1/2}}\\
    & \leq c\|V_c-\varphi\|^2_\cN I +  cJ,
\end{align*}
where $I:=1 + \|U\|^4_X$ and $J:=\|U\|^4_X \|V_c\|^2_\cN+ \|V_s\|^2_{\cH^{1/2}}$ and $c$ stands for a universal constant which varies from line to line. 
Again, Gronwall's inequality on $[0,T_0]$ entails
\begin{align*}
\|V_c(T)-\varphi(T)\|^2 
\leq c \Big( \|V_c(0)-\varphi(0)\|^2 +c\int_0^T J(S)~\txtd S \Big) \cdot \exp \Big(c\int_0^T I(S)~\txtd S\Big).
\end{align*}
This gives $\|V_c(T)-\varphi(T)\| \leq C\nu^{1/2}$ on a set of probability arbitrarily close to $1$, when the constant $C$ goes to $\infty$. In this case we remark that the order of $J$ does not depend on $H$ since it is determined by $\int_0^T \|U(S)\|^4~\txtd S=\cO(1)$ and $\|V_s\|_{L^2(0,T_0;\cH^{1/2})}=\cO(\nu^{1/2})$. 

Obviously, the exponent $\int_0^T I(S)~\txtd S$ can be bounded by a constant on a set of large probability. 

In conclusion we obtain on $[0,T_0]$ and on a set of probability arbitrarily close to $1$ 
\[ 
\|V(T)-\varphi(T)\|_\cH\leq  \|V_s(T)\|_{\cH} +\|V_c(T)-\varphi(T)\|_{\cH}\leq C \nu^{1/2} .
\]

\begin{remark}\label{rem:h}
    For this argument we do not need the set $\tilde\Omega$ constructed in Lemma~\ref{l1}, since we consider the linerization of the amplitude equation around zero. The result obtained here provides a bound of the error term of order $\cO(\nu^{1/2})$ independent of the value of $H\in(0,1)$. 
\end{remark}


\subsection{Case: $\nu=0$, $1\gg\sigma>0$ -- Stability at the bifurcation point}\label{case3}


At the bifurcation point, we consider $\varepsilon=\sigma^H$. Here the amplitude equation is
\[
\txtd b=P_c\cF(b)~\txtd T+\txtd \beta_\varepsilon(T).
\]
Therefore, we get 
\begin{align}\label{phi} \partial_T \varphi = D \cF_c(b)\varphi. 
\end{align}

The linearization of the SPDE~\eqref{spde} reads now as \[ \partial_t v = A v + D\cF(u)v, \]
which means that setting $v(t)=\varepsilon V(t\varepsilon^2)$ we obtain
\begin{align}\label{v} \partial_T V =\varepsilon^{-2} A V + D\cF(U)V. \end{align}
As in the previous cases we compute the error term between the two linearizations. 
\begin{theorem} Let $b_0$ be an initial datum for which the corresponding solution satisfies Lemma~\ref{l2}. 
For all $p\in(0,1)$ there is a constant $C_p$ and a set $ \Omega_p$ with probability larger than $p$ such that the approximation order between the linearization of the SPDE~\eqref{v} and of the amplitude equation~\eqref{phi} with initial data $u_0=\varepsilon b_0$  is bounded by  $C_p\varepsilon$ if $H>1/2$ respectively $C_p \varepsilon^{2H}$ if $H<1/2$ on the set $ \Omega_p$.
\end{theorem}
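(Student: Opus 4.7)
The plan mirrors the proof of Theorem~\ref{error:linearization1}, with two structural differences: the linear $+V$ term in the $V$-equation disappears because $\nu=0$, while the linearized amplitude equation $\partial_T\varphi=D\cF_c(b)\varphi$ now carries a genuine (non-positive) zero-order coefficient coming from the non-trivial $b$. I would begin by splitting $V=V_c+V_s$ and deriving a uniform bound $\|V(T)\|_{\cH}=\cO(1)$ on $[0,T_0]$ via the energy estimate together with the sign condition~\eqref{e:sign}. For the stable component an inequality analogous to~\eqref{e:Vappr}, but without the $+\|V_s\|^2$ term, gives after $\varepsilon$-Young
\[
\tfrac12\partial_T\|V_s\|^2\leq -\tfrac12 c\varepsilon^{-2}\|V_s\|^2_{\cH^{1/2}}+C\varepsilon^2\|U\|^4_X\|V_c\|^2_X,
\]
which combined with the bound $\int_0^{T_\varepsilon}\|u(t)\|^4_X\,\txtd t=\cO(\varepsilon^2)$ from Theorem~\ref{thm:aprox} produces, on a set of large probability, $\|V_s(T)\|_{\cH}=\cO(\varepsilon)$ and $\|V_s\|_{L^2(0,T_0;\cH^{1/2})}=\cO(\varepsilon^2)$.

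Next I would write the equation for the central error,
\[
\partial_T(V_c-\varphi)=D\cF_c(b)(V_c-\varphi)+P_cD\cF(U)V_s+P_c[D\cF(U)-D\cF(b)]V_c,
\]
and take the $\cN$-inner product with $V_c-\varphi$. The first term is non-positive by~\eqref{e:sign}; the second is bounded via $C\|U\|_X^2\|V_s\|_{\cH^{1/2}}\|V_c-\varphi\|$; the third, after expanding the cubic and separating $U_c,U_s$, by $C(\|b-U_c\|^2_{\cN}+\|U_s\|^2_X)\|V_c\|\|V_c-\varphi\|$. Young's inequality then gives an estimate of the form $\tfrac12\partial_T\|V_c-\varphi\|^2\leq C\cdot I\cdot \|V_c-\varphi\|^2+C\cdot J$ with
\[
I:=1+\|U\|_X^4+\|b\|_{\cN}^2,\qquad J:=\|V_s\|_{\cH^{1/2}}^2+\bigl(\|b-U_c\|_{\cN}^4+\|U_s\|_X^4\bigr)\|V_c\|^2.
\]
Gronwall's inequality with $V_c(0)=\varphi(0)\in\cN$ reduces the estimate for $\|V_c(T)-\varphi(T)\|$ to the order of $\int_0^{T_0}J\,\txtd S$, provided the Gronwall exponential is bounded on a set of large probability.

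The dominant contribution to $J$ comes from the approximation of $U_c$ by $b$: after passing to the slow scale and invoking Theorem~\ref{thm:aprox}, one obtains $\|b-U_c\|_{\cN}=\cO(\varepsilon^{2H})$ for $H<1/2$ and $\cO(\varepsilon)$ for $H>1/2$. Together with $\int_0^{T_0}\|U_s\|_X^4\,\txtd S=\cO(\varepsilon^{4H})$ (resp.\ $\cO(\varepsilon^{4})$) and $\|V_s\|_{L^2(\cH^{1/2})}^2=\cO(\varepsilon^4)$, this produces exactly the claimed dichotomy $C_p\varepsilon^{2H}$ versus $C_p\varepsilon$, since the $\cO(\varepsilon)$ contribution of $V_s$ itself is absorbed for $H<1/2$ and matches the leading order for $H>1/2$. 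The main obstacle is controlling the Gronwall factor $\exp\bigl(C\int_0^{T_0}(\|U\|_X^4+\|b\|_{\cN}^2)\,\txtd S\bigr)$ on a set of probability at least $p$: the $\|U\|_X^4$-integral is $\cO(1)$ with probability going to $1$ by Theorem~\ref{thm:aprox}, while on the finite horizon $[0,T_0]$ the solution $b$ of the amplitude equation with additive fractional noise has arbitrary moments (via a standard pathwise comparison, as in~\cite[Lemma~4.10]{BlNe:22}), so Markov's inequality delivers the required set $\Omega_p$ and the $p$-dependent constant $C_p$.
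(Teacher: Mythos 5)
Your proposal is correct and follows essentially the same route as the paper's proof: the same $\cO(1)$ bound on $V$, the same $\cO(\varepsilon)$ and $\cO(\varepsilon^2)$ bounds on $V_s$ in $\cH$ and $L^2(0,T_0;\cH^{1/2})$, an algebraically equivalent decomposition of $D\cF_c(U)V-D\cF_c(b)\varphi$, and the same Gronwall argument with $J$ dominated by $\|b-U_c\|_{\cN}^4$ and $\|U_s\|_{L^4(0,T_0;X)}^4$, yielding the dichotomy $\varepsilon^{2H}$ versus $\varepsilon$. The only (harmless) deviation is that you discard the term $\langle D\cF_c(b)(V_c-\varphi),V_c-\varphi\rangle$ by the sign condition~\eqref{e:sign}, whereas the paper bounds it by $c\|b\|_{\cN}^2\|V_c-\varphi\|_{\cN}^2$ and carries $\|b\|_{\cN}^2$ in the Gronwall coefficient $I$.
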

\begin{proof}
Since the linear term containing $\nu$ drops out, we compute new energy estimates. 
To get a $\cO(1)$ bound on $V$ we rely on the energy-estimate
\[ 
\frac{1}{2}\partial_T \|V\|^2 
\leq \varepsilon^{-2}\langle A V, V \rangle + \langle D \cF(U)V,V\rangle, 
\]
which gives now due to~\eqref{e:boundF} 
\[ 
\frac{1}{2} \partial_T \|V\|^2 
\leq \varepsilon^{-2} \langle AV, V \rangle\leq 0,
\]
due to the non-negativity of $A$. As $V(0)=\cO(1)$ this yields a uniform $\cO(1)$ bound on $V$ in $\cH$ on $[0,T_0]$.  Due to the $\cO(1)$ bound on $V$ in $\cH$ we can also bound $V_c$ in $\cN$ in any norm.  

For $V_s$ we obtain as before that $\|V_s(T)\|_{\cH}=\cO(\varepsilon)$ and that 
$\|V_s\|_{L^2(0,T_0,\cH^{1/2})}=\cO(\varepsilon^2)$. 
This follows by the usual energy estimate regarding Assumption~\ref{a} and~\eqref{e:boundF} combined with the $\varepsilon$-Young inequality. To be more precise, the estimate is based on
\begin{align*}
    \frac{1}{2} \partial_T \|V_s\|^2& = \varepsilon^{-2}\langle A V_s, V_s\rangle + \langle P_s D\cF(U)V, V_s \rangle\\
    &\leq -C \varepsilon^{-2}\|V_s\|^2_{\cH^{1/2}}
    + C \|U\|^2_X \|V_c\|_X \|V_s\|_{\cH^\alpha}\\
    & \leq - C \varepsilon^{-2} \|V_s\|^2_{\cH^{1/2}} + C \varepsilon^2 \|U\|^4_X \|V_c\|^2_X + \frac12C \varepsilon^{-2}\|V_s\|_{\cH^\alpha}^2\\
    & \leq -\frac12 C\varepsilon^{-2}\|V_s\|_{\cH^{1/2}} + C \varepsilon^2 \|U\|^4_X \|V_c\|^2_X.
\end{align*}

For $V_c$ and $\varphi$ we have 
$$ 
\partial_T V_c = D \cF_c(U)V 
\quad \text{ and } \quad 
\partial_T \varphi=D\cF_c(b) \varphi, $$
leading to
\[
\partial_T (V_c-\varphi) 
= (D\cF_c(U)- D\cF_c(b)) V + D\cF_c(b) (V-\varphi) .
\]
For the difference, we estimate as follows. Here $c$ is a universal constant which varies from line to line. 
\begin{eqnarray*}
\lefteqn{\frac{1}{2}\partial_T \|V_c-\varphi\|^2}\\ &=&\langle (D\cF_c(U) -D\cF_c(b)) V , V_c-\varphi   \rangle + \langle D\cF_c(b) (V-\varphi), V_c-\varphi \rangle\\
& =& \langle (D\cF_c(U) -D\cF_c(b)) V , V_c-\varphi   \rangle + \langle D\cF_c(b) (V_c-\varphi), V_c-\varphi  \rangle + \langle D\cF_c(b) V_s, V_c-\varphi \rangle\\
& \leq& c\|U - b\|^2_{X} (\|V_s\|_{X} +\|V_c\|_{\cN}) \|V_c-\varphi\|  
+c\|b\|^2_{\cN} \|V_c-\varphi \|^2_{\cN} 
+ c\|b\|^2_{\cN} \|V_s\|_{\cH^\alpha} \|V_c-\varphi\|
\\
& \leq& c\|U_c-b\|^2_{\cN} (\|V_s\|_{\cH^\alpha} +\|V_c\|_{\cN}) \|V_c-\varphi\|_{\cN} 
+ c\|U_s\|^2_{X} (\|V_s\|_{\cH^\alpha}
+c\|V_c\|_{\cN} ) \|V_c-\varphi\|_{\cN}
\\ 
&& +\|b\|^2_{\cN} \|V_c-\varphi\|^2_{\cN} 
 +c \|b\|^4_{\cN} \|V_s\|^2_{\cH^\alpha} 
 + c \|V_c-\varphi\|^2_{\cN}
 \\
& \leq & c \|U_c-b\|^4_{\cN} \|V_c-\varphi\|^2_{\cN} + c \|V_s\|^2_{\cH^{1/2}} 
+ c \|U_c-b\|^4_{\cN} \|V_c\|^2_{\cN} 
+ c \|V_c-\varphi\|^2_{\cN}
\\
&& + c\|U_s\|^4_X \|V_c-\varphi\|^2_{\cN} 
+  c\|U_s\|^4_X \|V_c\|^2_{\cN} 
+ c\|b\|^2_{\cN} \|V_c-\varphi\|^2_{\cN} 
 +c\|b\|^4_{\cN} \|V_s\|^2_{\cH^{1/2}}.  
\end{eqnarray*}
Thus
\[
\partial_T \|V_c-\varphi\|^2 \leq c I \|V_c-\varphi\|^2_{\cN} + c J,
\]
where 
\[I:= 1 +\|U_c-b\|^4_{\cN} +\|U_s\|^4_X +\|b\|^2_{\cN}
\] 
and 
\[J := \|V_s\|^2_{\cH^{1/2}} + \|U_c-b\|^4_{\cN} \|V_c\|^2_{\cN} +\|U_s\|^4_{X}\|V_c\|^2_{\cN} + \|b\|^4_{\cN}\|V_s\|^2_{\cH^{1/2}}.
\]
Using
Gronwall's inequality as before we obtain
\begin{align*}
\|V_c(T)-\varphi(T)\|^2 \leq \Big( \|V_c(0)-\varphi(0)\|^2 +c\int_0^T J(S)~\txtd S \Big)\cdot \exp \Big(c\int_0^T I(S)~\txtd S\Big).
\end{align*}
Now we use again the $\cO(1)$ bounds on $V_c$ and $b$ and the $\cO(\varepsilon^{4H})$-bounds for $\|U_s\|_{L^4(0,T_0,X)}$ and $\cO(\varepsilon^2)$ for
$ \|V_s\|_{L^2(0,T_0,\cH^{1/2})}$
together with Theorem~\ref{thm:aprox} that yields
\[
\|b(T)-U_c(T)\|_{\cH} 
= \varepsilon^{-1} \|\varepsilon b- u_c(\varepsilon^{-2} \cdot )\|_{\cH} =\cO(\varepsilon^{2H}).
\]
In contrast to case \ref{case1} we only need  pathwise bounds on $J$ of order $\cO(\varepsilon^2)$ respectively $\cO(\varepsilon^{4H})$ (depending on the range of $H$)
and on $b$ of order $\cO(1)$, which hold on a set of probability arbitrarily close to $1$. There is no need for $b$ being small.     

Moreover, a bound by a constant of the exponent $\int_0^T I(S)~\txtd S$ holds as before  on some set of probability arbitrarily close to $1$. 

Thus we finally conclude that
$\|V(T)-\varphi(T)\|_\cN\leq C[ \varepsilon +\varepsilon^{2H}]$ for all $T\in[0,T_0]$ on a set of probability  arbitrarily close to $1$. Actually its probability goes to $1$ if $C\to\infty$.  
\qed
\end{proof}\\

Regarding Lemma~\ref{l2} we obtain the following bound on the FTLEs. First of all we recall that for the set $\Omega_\theta=\{ \mathcal{T}_\theta(T_0)<\sqrt{p_\theta} \}$ we showed that $\P(\Omega_\theta)\geq 1 - \sqrt{p_\theta}$ where $p_\theta\to 0$
as $\theta\to 0 $. Here $T_0$ is an arbitrary terminal time and $\mathcal{T}_\theta(T_0)$ is the set of times for which the amplitude equation is smaller than $\theta$. 
Keeping this in mind and recalling that $D\cF_c(b)=-cb^2$ we derive the following statement. 

\begin{theorem}\label{thm:main3} Let $b_0$ be an initial data of~\eqref{a2} for which the corresponding solution satisfies Lemma~\ref{l2}. Furthermore let $\lambda_{\widetilde{T}}$ be the finite-time Lyapunov exponent of the SPDE~\eqref{spde} with initial condition $u_0=\varepsilon b_0$.
For all probabilities $p\in(0,1)$ 
there exist a set $\tilde\Omega_p$ with probability larger than $p$ and constants $C_p,c_p>0$ and times $T_p$ with $T_p\to0$ as $p\to 1$, $c_p\to0$ and $C_p\to\infty$ for $t\to0$ 
such that for $\omega\in \tilde\Omega_p$  we have for  $\widetilde{T}\in [{T_p},T_0]$ that
\begin{align*}
\lambda_{\widetilde{T}\varepsilon^{-1}}(\varepsilon b_0)\leq 
-c_p\varepsilon + [\varepsilon^{2}+\varepsilon^{2H+1} ] \frac{e^{c_p{\widetilde{T}} }}{\widetilde{T}} 
\end{align*}
\end{theorem}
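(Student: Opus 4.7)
The plan is to follow the blueprint of Theorems~\ref{thm:main1} and~\ref{thm:main2}: couple the linearization of the SPDE to the linearization~\eqref{phi} of the amplitude equation via the approximation result just stated, then feed in the decay of $\|\varphi(T)\|$ coming from Lemma~\ref{l2}, and finally convert the bound to SPDE time through the rescaling $T = \varepsilon^2 t$.

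First I would pass to the slow scale $v(t) = \varepsilon V(\varepsilon^2 t)$ and, as in the previous cases, reduce the computation of $\lambda_{\widetilde T \varepsilon^{-1}}(\varepsilon b_0)$ to an upper bound on $\sup\{\|V(T)\| : V(0) \in \cN,\ \|V(0)\| = 1\}$ at the corresponding slow time $T$. The stable component $V_s$ was already shown to be of size $\cO(\varepsilon)$ and can be absorbed into the approximation error. The approximation result proved just above the theorem then supplies a set $\Omega_p^{(1)}$ of probability at least $(1+p)/2$ on which
\[
\|V(T) - \varphi(T)\|_{\cH} \leq C_p[\varepsilon + \varepsilon^{2H}] \quad\text{for all } T\in [0,T_0].
\]

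Next, I would choose $\theta = \theta_p$ in Lemma~\ref{l2} small enough that $\sqrt{p_{\theta_p}} \leq (1-p)/2$; the lemma then furnishes a set $\Omega_p^{(2)}$ of probability at least $(1+p)/2$ on which $\cT_{\theta_p}(T_0) \leq \sqrt{p_{\theta_p}}$. Since $\varphi(T) = \varphi(0)\exp\bigl(\int_0^T D\cF_c(b(s))\,\txtd s\bigr)$ and $D\cF_c(b) = -cb^2 \leq 0$, the same computation as in the proof of Lemma~\ref{l2} yields
\[
\|\varphi(T)\| \leq \exp(-c_p T) \qquad \text{for } T \in [T_p, T_0],
\]
with $T_p := 2\sqrt{p_{\theta_p}}$ and $c_p := \tfrac{c}{2}\theta_p^2$, both tending to zero as $p \to 1$. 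On $\tilde\Omega_p := \Omega_p^{(1)} \cap \Omega_p^{(2)}$ (probability $\geq p$), the triangle inequality together with the elementary bound $\ln(e^{-a} + b) \leq -a + b\,e^{a}$ gives
\[
\ln \|V(T)\| \leq -c_p T + C_p[\varepsilon + \varepsilon^{2H}]\,e^{c_p T},
\]
and multiplying by the appropriate FTLE prefactor, exactly as at the end of the proof of Theorem~\ref{thm:main1}, produces the claimed inequality.

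The main obstacle I foresee lies in the bookkeeping step. In contrast with Cases I and II, we cannot reduce matters to an almost-deterministic statement via the support theorem: here the sign of $\int_0^T D\cF_c(b(s))\,\txtd s$ is entirely driven by how much time $|b|$ spends near zero, and Lemma~\ref{l2} is the only substitute available for the Birkhoff-type arguments used in the Brownian case~\cite{BlNe:23}. The parameters $\theta_p$, $T_p = 2\sqrt{p_{\theta_p}}$ and $c_p = \tfrac{c}{2}\theta_p^2$ must therefore be coupled carefully so that (i) both $\Omega_p^{(1)}$ and $\Omega_p^{(2)}$ have probability at least $(1+p)/2$ for the \emph{same} choice of $\theta_p$, and (ii) the decay $e^{-c_p T}$ eventually dominates the approximation error $C_p(\varepsilon + \varepsilon^{2H})$. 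The latter requirement is precisely what forces the lower cutoff $\widetilde T \geq T_p$ in the time interval on which the bound is meaningful.
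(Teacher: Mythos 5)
Your proposal is correct and follows essentially the same route as the paper's proof: couple $V$ to $\varphi$ via the approximation theorem, use the explicit exponential formula for $\varphi$ together with the occupation-time bound of Lemma~\ref{l2} on the set $\Omega_\theta$ to get $\|\varphi(\widetilde T)\|\leq e^{-c_\theta \widetilde T}$ for $\widetilde T\geq 2\sqrt{p_\theta}$, apply the elementary estimate $\ln(e^{-a}+b)\leq -a+be^{a}$, and rescale. Your explicit $(1+p)/2$ bookkeeping for the two probability sets is slightly more careful than the paper's, but it is the same argument.
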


Note that the proof relies on Lemma \ref{l2}. Having more knowledge about the density of the solution $b$ of the amplitude we conjecture it should be possible to get an $\varepsilon$-dependent $T_p$.

\begin{proof}
Our goal is to find a  bound for
\begin{eqnarray*}
\lefteqn{\lambda_{\widetilde{T}\varepsilon^{-1}}(\varepsilon b_0) = \frac{\varepsilon}{\widetilde{T}}\ln( \sup\{\|V(\widetilde{T})\| \ : \ \|V(0)\|=1  \})}\\
& \leq & \frac{\varepsilon}{\widetilde{T}}  \ln\sup (\{\|\varphi(\widetilde{T})\| + \|V(\widetilde{T})-\varphi(\widetilde{T})\| : \|V(0)\|=1\})\\
&\leq   & \frac{\varepsilon}{\widetilde{T}}\ln( \exp\{\int_0^{\widetilde{T}} D\cF_c(b(s,\omega))~\txtd s\}  +C_p[\varepsilon +\varepsilon^{2H}]).
\end{eqnarray*}
The upper bound is clear as long as the solution of the amplitude equation~\eqref{a2} does not spend too much time in zero. To exclude this possibility, we established in Lemma~\ref{l2} a lower bound on the probability of the set $\Omega_\theta$ with $\mathbb{P}(\Omega_\theta)\to1$ if $\theta\to0$, where 
\[
 \int_0^{T} D\cF_c(b(s,\omega))~\txtd s \leq -{c}_\theta T,
\]
for a constant $c_\theta=c\theta^2 \to0$ if $\theta\to0$.

This further entails that
\[ \lambda_{\widetilde{T}}(b_0) =\frac{1}{\widetilde{T}} \ln\exp\Big( \int_0^{\widetilde{T}} D\cF_c(b(s,\omega ))~\txtd s\Big) < - c_\theta<0  \] 
on the set  $\Omega_\theta$.

 Regarding this we easily derive on the set of large probability $\Omega_p\cap \Omega_\theta$ that
\begin{align*}
\ln (e^{-c_\theta\widetilde{T}} +C_p[\varepsilon +\varepsilon^{2H} ] )
& = \ln (e^{-c_\theta\widetilde{T}}(1+C_p[\varepsilon +\varepsilon^{2H}]e^{c_\theta\widetilde{T}}) ) 
=-c_\theta\widetilde{T} +\ln (1+C_p[\varepsilon+\varepsilon^{2H}] e^{c_\theta\widetilde{T}}) \\ 
&\leq - c_\theta\widetilde{T} +C_p[ \varepsilon +\varepsilon^{2H} ] e^{c_\theta\widetilde{T}}.
\end{align*}
This further leads to 
\[ \lambda_{\widetilde{T}\varepsilon^{-1}} (\varepsilon b_0)  
\leq - c_\theta\varepsilon +C_p[ \varepsilon^{2} +\varepsilon^{2H+1}] \frac{e^{c_\theta{\widetilde{T}} }}{\widetilde{T}},\]
which proves the statement.
\qed\end{proof}

\begin{remark}
    For the Brownian motion we proved a similar assertion in~\cite{BlNe:23} for the stationary solution of the amplitude equation~\ref{a2} using Birkhoff's ergodic theorem. We improve now this result in Lemma~\ref{l2} for an arbitrary solution deriving showing that the probability that the amplitude equation stays close to zero for a lot of times is small. For a higher dimensional kernel and / or multiplicative noise, this property is expected to hold, see~\cite{Bl:07} for a similar discussion for amplitude equations with Brownian motion.
\end{remark}


\subsection{Case: $1\gg\sigma\gg  \nu^{H+1/2} >0$ -- Stability}\label{case4}

This situation can be dealt with similar to Case~\ref{case3} using the amplitude equation
\begin{align}\label{a3}
\txtd \tilde{b}=\Big[\frac{\nu^{H+1/2}}{\sigma}\tilde{b} + P_c\cF(\tilde{b})\Big]~\txtd T+\txtd \beta_\varepsilon(T),
\end{align}
and its linearization
\[ \partial_T \tilde{\varphi} =\frac{\nu^{H+1/2}}{\sigma}\tilde{\varphi} +D\cF_c(\tilde{b})\tilde{\varphi}.   \]
Since the difference between~\eqref{a3} and~\eqref{a2} is of order $\cO(\frac{\nu^{H+1/2}}{\sigma})$, the following statement can be obtained analogously to Case~\ref{case4}. However there is a major difference for the error term compared to the previous cases. More precisely, here the order of $J$ will be determined by 
\[ \|\tilde{b}(T) - U_c(T)\|_{\cN} \leq   C\frac{\nu^{H+1/2}}{\sigma} +\|b(T) -U_c(T)\|_{\cN} \leq  C\frac{\nu^{H+1/2}}{\sigma} +C\varepsilon^{2H} , \]
which is in the lowest order $\frac{\nu^{H+1/2}}{\sigma}$,
since $\sigma\gg\nu^{H+1/2}$. 
\begin{theorem}\label{thm:main4}  Let $b_0$ be an initial data of~\eqref{a2} for which the corresponding solution satisfies Lemma~\ref{l2}. Furthermore let $\lambda_{\widetilde{T}}$ be the finite-time Lyapunov exponent of the SPDE~\eqref{spde} with initial data $u_0=\varepsilon b_0$.
For all probabilities $p\in(0,1)$ 
there exists a set $\tilde\Omega_p$ with probability larger than $p$, a time $T_p>0$ with $T_p\to0$ for $p\to1$ and positive constants $c_p$ and $C_p$ such that $C_p\to\infty$ and $c_p\to0$ 
such that for $\omega\in \tilde\Omega_p$ and for all $\widetilde{T}\in[T_p,T_0]$ we have 
\[ \lambda_{\widetilde{T}\sigma^{-H}} (\sigma^H b_0)
\leq \sigma^H\Big(-c_p +\frac{\nu^{H+1/2}}{\sigma} \Big) + \sigma^H\left(\frac{\nu^{H+1/2}}{\sigma}\right)^2 
\cdot \frac{e^{c_p T-\frac{\nu^{H+1/2}}{\sigma}}}{\widetilde{T}}. \]
\end{theorem}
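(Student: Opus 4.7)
The plan is to adapt the proof of Theorem~\ref{thm:main3} (Case~\ref{case3}) to the modified amplitude equation~\eqref{a3}, treating the additional drift $\frac{\nu^{H+1/2}}{\sigma}\tilde b$ as a small perturbation since $\sigma\gg\nu^{H+1/2}$. As a first step I would upgrade Theorem~\ref{thm:aprox} to compare $u$ with $\tilde b$: subtracting~\eqref{a3} from~\eqref{a2} and applying Gronwall yields $\|\tilde b-b\|_{\cN}=\cO(\tfrac{\nu^{H+1/2}}{\sigma})$ on $[0,T_0]$, and combining with~\eqref{error} gives
\[
\|\tilde b(T)-U_c(T)\|_{\cN}\leq C\Big(\tfrac{\nu^{H+1/2}}{\sigma}+\varepsilon^{2H}\Big),
\]
as announced in the paragraph preceding the theorem. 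All other bounds on $u$ from Theorem~\ref{thm:aprox} (in particular $u_s=\cO(\varepsilon^{2H+1})$ and $\int_0^{T_\varepsilon}\|u_s\|^4_X\,\txtd t=\cO(\varepsilon^{4H+2})$) transfer verbatim since they depend only on the SPDE.

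Second, I would redo the linearization estimates of Case~\ref{case3}. The bounds $\|V_s(T)\|_{\cH}=\cO(\varepsilon)$ and $\|V_s\|_{L^2(0,T_0;\cH^{1/2})}=\cO(\varepsilon^2)$ carry over verbatim since the $\nu$-linear term of the SPDE is $\cO(\varepsilon^2)$. For the central component,
\[
\partial_T(V_c-\tilde\varphi)=-\tfrac{\nu^{H+1/2}}{\sigma}\tilde\varphi+D\cF_c(U)V-D\cF_c(\tilde b)\tilde\varphi,
\]
so the Case~\ref{case3} energy estimate picks up only one additional term $\tfrac{\nu^{H+1/2}}{\sigma}\|\tilde\varphi\|\|V_c-\tilde\varphi\|$ which is absorbed via Young's inequality into a lower-order correction of $I$. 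Since $\|\tilde b-U_c\|^4_{\cN}=\cO\bigl((\tfrac{\nu^{H+1/2}}{\sigma})^4+\varepsilon^{8H}\bigr)$, Gronwall together with the $L^2$-bound on $V_s$ yields, on a set $\Omega_p$ of probability at least $p$,
\[
\|V(T)-\tilde\varphi(T)\|_{\cH}\leq C_p\Big(\tfrac{\nu^{H+1/2}}{\sigma}\Big)^{\!2}+C_p\,\varepsilon^{2H}
\]
uniformly in $T\in[0,T_0]$; the first term dominates in the regime $\sigma\gg\nu^{H+1/2}$.

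Third, I would apply Lemma~\ref{l2} to~\eqref{a3}. The extra drift does not concentrate the density of $\tilde b$ at zero (by~\cite[Theorem 1.2~(I)]{BNT:16}), so there is still a set $\Omega_\theta$ with $\P(\Omega_\theta)\geq 1-\sqrt{p_\theta}$ on which $\int_0^{\widetilde T}D\cF_c(\tilde b(s))\,\txtd s\leq -c_\theta\widetilde T$. On $\tilde\Omega_p:=\Omega_p\cap\Omega_\theta$, the logarithm trick from Theorem~\ref{thm:main3} applied to
\[
\|\tilde\varphi(\widetilde T)\|\leq\exp\!\Big\{\int_0^{\widetilde T}\bigl[\tfrac{\nu^{H+1/2}}{\sigma}+D\cF_c(\tilde b(s))\bigr]\,\txtd s\Big\}\leq e^{(\nu^{H+1/2}/\sigma-c_\theta)\widetilde T},
\]
plus the error term above, multiplied by the FTLE prefactor $\sigma^H/\widetilde T$ (from the rescaling $\widetilde T\sigma^{-H}\leftrightarrow\widetilde T$), produces the claimed bound.

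The main obstacle will be to show that Lemma~\ref{l2} extends uniformly to~\eqref{a3} as $\tfrac{\nu^{H+1/2}}{\sigma}\to 0$: its proof uses non-concentration of the density at zero, so the associated constant must be shown to be uniform in the drift perturbation, which in turn ensures $p_\theta$ and $T_p=2\sqrt{p_\theta}$ do not degenerate. A secondary, bookkeeping task is to track the $\tfrac{\nu^{H+1/2}}{\sigma}$ factor through every Gronwall constant; this is precisely what produces both the additive $\sigma^H\cdot\tfrac{\nu^{H+1/2}}{\sigma}$ term and the correction in the exponent of the second term.
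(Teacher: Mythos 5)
Your proposal follows essentially the same route as the paper: treat the extra drift as an $\cO(\nu^{H+1/2}/\sigma)$ perturbation of Case~\ref{case3}, obtain $\|\tilde b-U_c\|_{\cN}\leq C(\nu^{H+1/2}/\sigma+\varepsilon^{2H})$, rerun the linearization and Gronwall estimates so that this term determines $J$, and conclude with Lemma~\ref{l2} and the logarithm trick. The only divergence is your last step: the paper sidesteps the uniformity issue you flag by invoking Lemma~\ref{l2} for the solution $b$ of~\eqref{a2} (exactly as in the theorem's hypothesis) and transferring to $\tilde b$ via $\|\tilde b-b\|_{\cN}=\cO(\nu^{H+1/2}/\sigma)$, rather than re-proving the non-concentration estimate for~\eqref{a3}.
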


\begin{proof}
    We only give a sketch of the proof, since this is similar to Case~\ref{case4}. Regarding the computations in Case~\ref{case3} we infer on a set of probability almost $1$ and for $T\in[0,T_0]$ that 
    \[\|V(T)-\tilde{\varphi}(T)\| \leq C \Big(\varepsilon + \Big( \varepsilon^{2H}+\frac{\nu^{H+1/2}}{\sigma}\Big)^2\Big).
\] 
This follows 
as before using for $T\in[0,T_0]$ that 
\[ 
\|V(T)-\tilde{\varphi}(T)\|_{\cH} \leq \|V_s(T)\|_{\cH} +\| V_c(T)- \varphi(T)\|_{\cN}.
\]
To estimate the last term we need a bound on $J$. As already indicated this is now determined by $\|\tilde{b}(T) -U_c(T)\|^2= \Big(\varepsilon^{2H}+\frac{\nu^{H+1/2}}{\sigma}\Big)^2$ since this expression becomes small as $\sigma\gg \nu^{H+1/2}$.
Note that in Case~\ref{case4} the order of $J$ was in lowest order determined by $\|V_s\|^2_{L^2(0,T;\cH^{1/2})}=\cO(\varepsilon^2)$ and $U_s\in L^4(0,T_0;X)=\cO(\varepsilon^{4H})$
and 
the other terms were higher order. 
In conclusion we now get for $T\in[0,T_0]$ that
\[ \|V(T)-\tilde{\varphi}(T)\|_{\cH}\leq C\Big( \varepsilon + \Big(\varepsilon^{2H}+\frac{\nu^{H+1/2}}{\sigma}\Big)^2\Big). \]
Therefore the lower bound for the FTLEs  for $\omega\in \tilde\Omega_p$ (as in Case~\ref{case4}) results in for $\widetilde{T}\in [T_p,T_0]$ 
\begin{eqnarray*}
\lefteqn{\lambda_{\widetilde{T}\varepsilon^{-1}}(\varepsilon b_0) = \frac{\varepsilon}{\widetilde{T}}\ln( \sup\{\|V(\widetilde{T})\| \ : \ \|V(0)\|=1  \})}\\
& \leq & \frac{\varepsilon}{\widetilde{T}}  \ln\sup (\{\|\varphi(\widetilde{T})\| + \|V(\widetilde{T})-\varphi(\widetilde{T})\| : \|V(0)\|=1\})\\
&\leq   & \frac{\varepsilon}{\widetilde{T}}\ln\Big( \exp\Big\{ \frac{\nu^{H+1/2}}{\sigma} \widetilde{T} +  \int_0^{\widetilde{T}} D\cF_c(b(s,\omega))~\txtd s\Big\}  +C\Big(\frac{\nu^{H+1/2}}{\sigma}\Big)^2 \Big).
\end{eqnarray*}
Using Lemma~\ref{l2} this entails for $\omega\in\tilde\Omega_p$ that 
\[ \lambda_{\widetilde{T}\varepsilon^{-1}} (\varepsilon b_0)\leq \varepsilon \Big(-c_p+\frac{\nu^{H+1/2}}{\sigma} \Big) + \varepsilon\Big(\frac{\nu^{H+1/2}}{\sigma}\Big)^2 \frac{e^{c_p-\frac{\nu^{H+1/2}}{\sigma}}}{\widetilde{T}}<0, \]
as claimed.
    \qed
\end{proof}

\end{document}